\theoremstyle{plain}
\newtheorem{theor10}{Theorem}
\newenvironment{theor1}
  {\pushQED{\qed}\begin{theor10}}
  {\popQED\end{theor10}}
\newtheorem{prop10}[theor10]{Proposition}
\newenvironment{prop1}
  {\pushQED{\qed}\begin{prop10}}
  {\popQED\end{prop10}}
\newtheorem{cor10}[theor10]{Corollary}
\newtheorem{lem10}[theor10]{Lemma}
\newtheorem{theor0}{Theorem}[section]
\newenvironment{theor}
  {\pushQED{\qed}\begin{theor0}}
  {\popQED\end{theor0}}
\newtheorem{lem0}[theor0]{Lemma}
\newenvironment{lem}
  {\pushQED{\qed}\begin{lem0}}
  {\popQED\end{lem0}}
\newtheorem{prop0}[theor0]{Proposition}
\newtheorem{cor0}[theor0]{Corollary}
\newenvironment{cor}
  {\pushQED{\qed}\begin{cor0}}
  {\popQED\end{cor0}}
\newtheorem{propr0}[theor0]{Property}
\newtheorem{hyp0}[theor0]{Hypothesis}
\newtheorem{result0}[theor0]{Result}
\newtheorem{conj0}[theor0]{Conjecture}
\newtheorem{heur0}[theor0]{Heuristics}
\newtheorem{defin0}[theor0]{Definition}
\newenvironment{defin}
  {\pushQED{\qed}\begin{defin0}}
  {\popQED\end{defin0}}
\theoremstyle{definition}
\newtheorem{rems0}[theor0]{Remarks}
\newtheorem{ex0}[theor0]{Example}
\newtheorem{exs0}[theor0]{Examples}
\newtheorem{rem0}[theor0]{Remark}
\newenvironment{rem}
  {\pushQED{\qed}\begin{rem0}}
  {\popQED\end{rem0}}
\newtheorem{qu0}[theor0]{Question}
\newtheorem{qus0}[theor0]{Questions}
\theoremstyle{plain}
\newtheorem{as0}[theor0]{Assumption}
\newtheorem*{asn0*}{\assumptionnumber}
  \providecommand{\assumptionnumber}{}
  \newenvironment{asn0}[2]
   {\renewcommand{\assumptionnumber}{Condition \!(#1) {\normalfont--- #2}}
    \begin{asn0*}
    \protected@edef\@currentlabel{{\normalfont(#1)}}}
   {\end{asn0*}}
\newenvironment{asn}
  {\pushQED{\qed}\begin{asn0}}
  {\popQED\end{asn0}}
\mathchardef\emptyset="001F
\numberwithin{equation}{section}
\newcommand{\N}{\mathbb N}
\newcommand{\e}{\varepsilon}
\newcommand{\Pc}{\mathcal{P}}
\newcommand{\Xc}{\mathcal X}
\newcommand{\dist}{\operatorname{dist}}
\newcommand{\R}{\mathbb R}
\newcommand{\Z}{\mathbb Z}
\newcommand{\Ic}{\mathcal I}
\newcommand{\Rc}{{\mathcal R}}
\newcommand{\Md}{\mathbb M}
\newcommand{\Nc}{\mathcal N}
\newcommand{\cvf}{\rightharpoonup}
\newcommand{\loc}{{\operatorname{loc}}}
\newcommand{\Id}{\operatorname{Id}}
\newcommand{\E}{\mathbb{E}}
\newcommand{\D}{\operatorname{D}}
\newcommand{\Aa}{\boldsymbol a}
\newcommand{\Bb}{\bar{\boldsymbol B}}
\newcommand{\Ld}{\operatorname{L}}
\newcommand{\diam}{\operatorname{diam}}
\newcommand{\Div}{{\operatorname{div}}}
\newcommand{\Sym}{{\operatorname{sym}}}
\newcommand{\Skew}{{\operatorname{skew}}}
\newcommand{\step}[1]{\noindent \textit{Step} #1.}
\newcommand{\substep}[1]{\noindent \textit{Substep} #1.}
\newcommand{\Pm}{\mathbb{P}}
\newcommand{\pr}[1]{\mathbb{P}\left[ #1 \right]}
\newcommand{\prmm}[1]{\mathbb{P}\,[ #1 ]}
\newcommand{\prm}[1]{\mathbb{P}\big[ #1 \big]}
\newcommand{\expec}[1]{\mathbb{E}\left[ #1 \right]}
\newcommand{\expecm}[1]{\mathbb{E}\big[ #1 \big]}
\newcommand{\expecM}[1]{\mathbb{E}\bigg[ #1 \bigg]}
\title[Continuum percolation in stochastic homogenization]{Continuum percolation in stochastic homogenization and the effective viscosity problem}
\author[M. Duerinckx]{Mitia Duerinckx}
\address[Mitia Duerinckx]{
Universit\'e Libre de Bruxelles, D\'epartement de Math\'ematique, 1050~Brussels, Belgium
\& Universit\'e Paris-Saclay, CNRS, Laboratoire de Math\'ematiques d'Orsay, 91405~Orsay, France
\& University of California, Los Angeles, Department of Mathematics, CA~90095, USA}
\email{mitia.duerinckx@ulb.be}
\author[A. Gloria]{Antoine Gloria}
\address[Antoine Gloria]{Sorbonne Universit\'e, CNRS, Universit\'e de Paris, Laboratoire Jacques-Louis Lions, 75005~Paris, France \& Institut Universitaire de France \& Universit\'e Libre de Bruxelles, D\'epartement de Math\'ematique, 1050~Brussels, Belgium}
\email{gloria@ljll.math.upmc.fr}
\begin{document}
\selectlanguage{english}

\begin{abstract}
This contribution is concerned with the effective viscosity problem, that is, the homogenization of the steady Stokes system with a random array of rigid particles, for which the main difficulty is the treatment of close particles. Standard approaches in the literature have addressed this issue by making \emph{moment assumptions on interparticle distances}. Such assumptions however prevent clustering of particles, which is not compatible with physically-relevant particle distributions. In this contribution, we take a different perspective and consider \emph{moment bounds on the size of clusters of close particles}. On the one hand, assuming such bounds, we construct correctors and prove homogenization (using a variational formulation and $\Gamma$-convergence to avoid delicate pressure issues). On the other hand, based on subcritical percolation techniques, these bounds are shown to hold for various mixing particle distributions with nontrivial clustering. As a by-product of the analysis, we also obtain similar homogenization results for compressible and incompressible linear elasticity with unbounded random stiffness.
\end{abstract}

\maketitle
\setcounter{tocdepth}{1}
\tableofcontents
\vspace{-0.5cm}

\section{Introduction}
This contribution is concerned with the stochastic homogenization of degenerate elliptic systems using subcritical continuum percolation arguments. Whereas there is a large body of literature on homogenization and invariance principles for the random conductance model on the Bernoulli percolation cluster on $\Z^d$ (see e.g.~the survey~\cite{Biskup-11} and references therein), much less is known so far in the continuum setting. Our main motivation to exploit continuum percolation in stochastic homogenization is the so-called effective viscosity problem (and the rigorous justification~\cite{Hofer-GV-20,DG-21b} of Einstein's effective viscosity formula~\cite{Einstein-05}). More precisely, we consider a random array of small rigid particles in a Stokes fluid: the rigidity of suspended particles hinders the fluid flow and is expected to effectively increase the fluid viscosity on large scales. If particles are uniformly separated by a fixed positive minimal distance, the definition of an effective viscosity in the sense of homogenization follows from standard arguments~\cite{DG-21a}. This uniform separation assumption is however unrealistic from a physical perspective, e.g.~\cite{BG-72a,BG-72b}. Inspired by~\cite{JKO94}, the first author relaxed this into moment assumptions on the interparticle distance~\cite{D-20}. Although expected to be optimal in a general stationary ergodic setting, these weaker assumptions still prohibit particle contacts in 3D and are not realistic either. 
Whereas moment assumptions on the interparticle distance contain only local information in terms of the two-particle density, we know from the iid discrete setting~\cite{Biskup-11} that more global information
can be quite relevant in homogenization and random walks in random environments --- such as the fact that large clusters of close particles are unlikely in a strongly mixing setting. 
In this contribution, we use subcritical continuum percolation arguments to control the size of clusters of particles and we prove homogenization in that context. Our results are twofold:
\begin{enumerate}[$\bullet$]
\item On the probabilistic side, we establish new and nearly-optimal moment bounds on the size of clusters of close particles for a large class of strongly mixing models, cf.~Theorem~\ref{th:as-perc}. The approach combines the notion of action radius that we introduced in~\cite{DG20b}, together with a renormalization argument developed by Duminil-Copin, Raoufi, and~Tassion in~\cite{DCRT-20} for subcritical percolation. We recover previous results on the Voronoi percolation model~\cite{DCRT-19} and on the Poisson Boolean model~\cite{DCRT-20}, although in a slightly suboptimal form, and we further obtain new results on other standard models, including Matérn hardcore processes and the random parking process~\cite{Penrose-01}
(which, unlike the previous models, yield disjoint inclusions, as needed for the application to the effective viscosity problem).
\smallskip\item On the analytical side, we prove homogenization for the effective viscosity problem based on such subcritical percolation estimates on clusters of close particles, cf.~Theorem~\ref{th:homog}.
The main analytical difficulty is to get around the incompressibility constraint andthe poor control of the pressure field in the direct proximity of particle contact points.
The oscillating test function method, div-curl arguments, and other PDE approaches to homogenization all require controlling the pressure field everywhere and lead unavoidably to restrictions on interparticle distances as in~\cite{DG-21a,D-20}. Instead, we start from the variational formulation of the problem in the set of divergence-free fields and we appeal to $\Gamma$-convergence arguments~\mbox{\cite{DalMaso-93,Braides-06}}. The subtle step is then the construction of a divergence-free recovery sequence, for which we combine subcritical percolation estimates with Bogovskii's construction in John domains~\cite{ADM-06}.
This variational argument precisely allows to avoid describing details of the fluid velocity and pressure near particle contact points.
As a by-product, we also deduce similar homogenization results for compressible and incompressible linear elasticity  with stiff inclusions
(the argument is of course much simpler in the compressible case),
cf.~Theorem~\ref{th:deg-ell}.
\end{enumerate}
Before we turn to the precise description of our results, let us mention the recent related work~\cite{GV-GL-21} by G\'erard-Varet and Girodroux-Lavigne  in the setting of homogenization of
stiff inclusions in the {\it scalar} conductivity problem, for which 
they also relax conditions on interparticle distances (albeit in a different spirit).
Since they focus on the simpler scalar problem, they avoid the subtle incompressibility issues and may rely on standard PDE approaches to homogenization. This scalar problem is a particular case of the compressible elasticity problem covered in Section~\ref{sec:elasticity}.

\section{Main results}
\subsection{Subcritical percolation}\label{sec:perco-results}
We start with suitable assumptions on the random set of inclusions. Given an underlying probability space $(\Omega,\Pm)$, let $\Pc=\{x_n\}_n$ be a random point process on $\R^d$, consider a collection $\{I_n^\circ\}_n$ of random shapes, where each $I_n^\circ$ is a random connected bounded open set, and define the corresponding inclusions $I_n:=x_n+I_n^\circ$. Note that random shapes $\{I_n^\circ\}_n$ may depend on~$\Pc$ and that inclusions are (momentarily) allowed to overlap. We consider the random set $\Ic:=\bigcup_nI_n$, which we assume to satisfy the following.

\begin{asn}{H}{Stationarity and ergodicity}\label{Hd}$ $\\
The point process $\Pc$ and the inclusion process~$\Ic$ are stationary and ergodic.
\end{asn}

We need some control on the number of close inclusions, which we express as moment bounds on the size of clusters of ``too'' close inclusions. 
For that purpose, given a threshold parameter~$\rho>0$, we consider the family $\{K_{p,\rho}\}_{p\in \N}$ of connected components of the fattened set $\Ic+\rho B$ (with $B$ the unit ball), and we then define the corresponding clusters
\begin{equation}\label{eq:clusters}
J_{p,\rho}\,:=\,\bigcup_{I_n\subset K_{p,\rho}}I_n.
\end{equation}
For notational convenience, we choose the label $n=0$ such that $I_0$ is the inclusion that is the closest to the origin (or the first such inclusion in the lexigographic order if it is not unique), and we choose the label $p=0$ such that the cluster $J_{0,\rho}$ contains $I_0$.
In these terms, we shall require that the law of the diameter of clusters has some algebraic decay.

\begin{asn}{Clust$_{\rho,\kappa}$}{Cluster decay condition of order $\kappa>0$ at scale~$\rho>0$}\label{Hd'}$ $\\
There exists a constant $C_0$ such that
\begin{equation}\label{eq:decay-J0}
\pr{\diam J_{0,\rho}>r}\,\le\, C_0r^{-\kappa}\qquad\text{for all $r>0$}.\qedhere
\end{equation}
\end{asn}

Before stating homogenization results under this condition for suitable $\kappa$, we discuss the validity of the latter.
In view of subcritical percolation theory, the condition~\ref{Hd'} is expected to be a consequence of
the following smallness condition, for some large enough constant $L$, whenever the random set~$\Ic$ has strong enough mixing properties; this is made rigorous in Theorem~\ref{th:as-perc} below.

\begin{asn}{Perc$_L$}{Subcritical percolation condition with constant $L\ge 1$}\label{Hd''}$ $\\
One of the following two conditions is satisfied:
\begin{enumerate}[$\bullet$]
\item \emph{Smallness:}
There exist $h_0,r_0>0$ such that
\[r_0^{d-1}\,\pr{\diam J_{0,h_0}>r_0}\,\le\,\tfrac1Lh_0^d.\]
\item \emph{Asymptotic smallness:}
There exists $\delta>0$ such that random shapes $\{I_n^\circ\}_n$ almost surely satisfy an interior ball condition with radius $\delta$, and there exists $r_0>0$ such that
\begin{equation}\label{eq:smallness}
r_0^{d-1}\,\limsup_{h\downarrow0}\,\pr{\diam J_{0,h}>r_0}\,\le\,\tfrac1L\delta^d.
\qedhere
\end{equation}
\end{enumerate}
\end{asn}

\begin{rem}\label{rem:no-coll}
The asymptotic smallness condition~\eqref{eq:smallness} is easily checked to hold whenever there is almost surely no contact between inclusions. More precisely, under condition~\ref{Hd}, provided that we have in addition
\begin{enumerate}[$\bullet$]
\item \emph{Absence of contacts:} $\dist(I_m,I_n)>0$ almost surely for all $m\ne n$;
\smallskip\item \emph{Local finiteness:} $\sum_{j=1}^\infty\prm{\sharp\{n:I_n\cap B\ne\varnothing\}\ge j}^\alpha<\infty$ for some $\alpha<1$;
\item \emph{Control on inclusion diameters:} $r^{d-1}\pr{\diam I_0>r}\to0$ as~\mbox{$r\uparrow\infty$} (this is trivially satisfied if inclusion diameters are bounded by a deterministic constant);
\end{enumerate}
then the asymptotic smallness condition~\eqref{eq:smallness} follows. In fact, the following stronger version holds in that case,
\begin{equation}\label{eq:smallness+}
\limsup_{r\uparrow\infty}\,r^{d-1}\limsup_{h\downarrow0}\,\pr{\diam J_{0,h}>r}\,=\,0.
\end{equation}
A proof is given in Remark~\ref{rem:no-coll-pr}.
\end{rem}

Under this condition~\ref{Hd''}, if the random set $\Ic$ has a finite range of dependence, then~\ref{Hd'} can be deduced from standard Bernoulli percolation via stochastic domination~\cite{LSS-97}, in which case the decay rate in~\eqref{eq:decay-J0} is exponential. The same exponential decay is obtained in~\cite{DCRT-19} for the Voronoi percolation model.
The richer setting of the Poisson Boolean model has also been analyzed in~\cite{DCRT-20}, in which case the decay is seen to depend on the mixing rate via the distribution of random radii of the inclusions.
Beyond these examples, it remains an open question whether~\ref{Hd'} holds true under a general mixing condition.
In the spirit of our work~\cite{DG20b} on functional inequalities, we consider a large class of inclusion processes where mixing can be conveniently captured via the existence of so-called \emph{action radii}.
\begin{asn}{AR$_{\kappa}$}{Control of action radii of order $\kappa>0$}\label{HAR}$ $\\
The inclusion process $\Ic$ is measurable with respect to a collection $\Xc=(\Xc_y)_{y\in\Z^d}$ of iid random elements $\Xc_y$'s with values in some measurable space $M$.
Given an iid copy $\Xc'$ of~$\Xc$, there exists a so-called action radius for $\Ic$ with respect to $\Xc,\Xc'$ at any point $z\in\Z^d$, which is defined as a nonnegative $\sigma(\Xc,\Xc')$-measurable random variable $R^z$ such that almost surely
\[\qquad\Ic(\Xc^z)\cap\R^d\setminus B_{R^z}(z)\,=\,\Ic(\Xc)\cap\R^d\setminus B_{R^z}(z),\]
where the perturbed field $\Xc^z=(\Xc^z_y)_{y\in\Z^d}$ is defined by $\Xc^z_z=\Xc'_z$ and $\Xc^z_y=\Xc_y$ for all~$y\ne z$.
In addition, there exists $C_0>0$ such that
\[\qquad\pr{R^z>r}\,\le\, C_0r^{-(\kappa+d)}\qquad\text{for all $r>0$}.\qedhere\]
\end{asn}

The upcoming theorem states that Conditions~\ref{Hd''} and~\ref{HAR} imply~\ref{Hd'} for some~$\rho>0$ small enough; the proof is given in Section~\ref{sec:percol}. Note that our argument fails to reach exponential decay (cf.~restriction $\gamma<1$ below): this can be improved for the Voronoi percolation model and for the Poisson Boolean model by making a stronger use of the structure of the underlying Poisson process~\cite{DCRT-19,DCRT-20}, but it is not clear to us whether this is also possible in the more general setting of action radii.
\begin{theor1}[Cluster estimates in subcritical continuum percolation]\label{th:as-perc}
Assume that Conditions~\ref{Hd} and \ref{Hd''} hold for some $L$ large enough. Then there exists $\rho>0$ such that, if~\ref{HAR} holds for some~$\kappa>0$, then there exists $C_1>0$ such that
\[\pr{\diam J_{0,\rho}>r}\,\le\, C_1r^{-\kappa}\qquad\text{for all $r>0$}.\]
In addition, for any $\gamma<1$, if we further have $\pr{R^z>r}\le C_0\exp(-\frac1{C_0}r^\gamma)$, and if \ref{Hd''} holds for some $L$ large enough (depending on $\gamma$), then there exist~$\rho,C_\gamma>0$ such that $\pr{\diam J_{0,\rho}>r}\le C_\gamma \exp(-\frac1{C_\gamma}r^\gamma)$.
\qedhere
\end{theor1}

In view of the construction of action radii in~\cite{DG20b}, we mention a number of concrete examples where the above applies. We essentially recover previous results on the Voronoi percolation model~\cite{DCRT-19} and on the Poisson Boolean model~\cite{DCRT-20}, and we further treat several new examples such as Matérn hardcore processes and the random parking process, which involve more complicated transformations of an underlying Poisson process and are relevant for our applications to the effective viscosity problem.
\begin{enumerate}[(a)]
\item \emph{Poisson Boolean model:} Given a Poisson process $\Pc=\{x_n\}_n$ with intensity $\lambda$, and given an independent sequence~$\{r_n\}_n$ of iid positive random variables, we consider the spherical inclusions $I_n:=B_{r_n}(x_n)$. If the law of random radii satisfies $\pr{r_n \ge r} \le C_0r^{-(\kappa+d)}$ for some~$\kappa>0$, then, using the construction of action radii for this model in~\cite[Section~3.5]{DG20b}, Theorem~\ref{th:as-perc} entails that  there exist~$\rho,C_1>0$ and $\lambda_0>0$ small enough such that, provided $\lambda\le\lambda_0$,
\[\pr{\diam J_{0,\rho}>r}\,\le\, C_1r^{-\kappa}\qquad\text{for all $r>0$}.\]
In addition, for any $\gamma<1$, if we have $\pr{r_n \ge r} \le C_0\exp(-\frac1{C_0}r^\gamma)$, there exist $\rho,C_\gamma>0$ and $\lambda_\gamma>0$ small enough such that we have $\pr{\diam J_{0,\rho}>r}\le C_\gamma\exp(-\tfrac1{C_\gamma}r^\gamma)$ provided $\lambda\le\lambda_\gamma$.
The same was obtained in~\cite{DCRT-20}, where the critical case with exponential decay $\gamma=1$ is further covered.
\smallskip\item \emph{Voronoi percolation model:} Given a Poisson process $\Pc=\{x_n\}_n$, we consider the associated Voronoi tessellation $\mathcal V=\{V_n\}_n$ of $\R^d$ (the Delaunay tessellation would do as well).
Given an independent sequence $\{b_n\}_n$ of iid Bernoulli variables with $\pr{b_n=1}=q$, we define $\Ic:=\bigcup_{n:b_n=1}V_n$.
Using the construction of action radii for Poisson random tessellations in~\cite[Section~3.2]{DG20b}, Theorem~\ref{th:as-perc} entails that for any $\gamma<1$ there exist~$\rho,C_\gamma>0$ and $q_\gamma>0$ small enough such that for $q\le q_\gamma$,
\[\pr{\diam J_{0,\rho}>r}\,\le \,  C_\gamma\exp(-\tfrac1{C_\gamma}r^\gamma)  \qquad\text{for all $r>0$}.\]
The same was obtained in~\cite{DCRT-19} in the optimal form with $\gamma=1$.
\smallskip\item \emph{Another Voronoi percolation model:} Given a Poisson process $\Pc=\{x_n\}_n$ and the associated Voronoi tessellation $\mathcal V=\{V_n\}_n$ of $\R^d$ (the Delaunay tessellation would do as well), we denote by $r_n$ the diameter of the cell $V_n$, and for given constants $\lambda^+,\lambda^->0$ we consider the random sets $\Ic^+:=\bigcup_{n:r_n > \lambda^+} V_n$ and $\Ic^-:=\bigcup_{n:r_n < \lambda^-} V_n$.
As in~(b), Theorem~\ref{th:as-perc} entails that for any $\gamma<1$ there exist $\rho,C_\gamma>0$ and there exist $\lambda_\gamma^+$ large enough and $\lambda_\gamma^-$ small enough such that for $\lambda^+\ge\lambda^+_\gamma$ and $\lambda^-\le\lambda^-_\gamma$,
\[\prm{\diam J_{0,\rho}^+>r}+\prm{\diam J_{0,\rho}^->r}\,\le \,  C_\gamma\exp(-\tfrac1{C_\gamma}r^\gamma)  \qquad\text{for all $r>0$}.\]
We conjecture that the same holds with $\gamma=1$, which is no longer covered by~\cite{DCRT-19,DCRT-20}.
\smallskip\item \emph{Matérn hardcore processes:} Given a Poisson process $\Pc^\circ=\{x_n^\circ\}_n$, Matérn processes are hardcore processes obtained by applying different types of thinning rules to $\Pc^\circ$:
\begin{enumerate}[---]
\item The Matérn~I process is obtained by deleting every point of $\Pc^\circ$ that is at distance~\mbox{$<1$} of some other point.
\item Given an independent sequence $\{v_n\}_n$ of iid uniform random variables on $[0,1]$, the Matérn~II process is obtained by deleting every point $x_n^\circ\in\Pc^\circ$ that is at distance~\mbox{$<1$} of some other point $x_m^\circ$ with $v_m<v_n$.
\item The Matérn~III process is obtained by an iterative graphical construction, see~\cite{Penrose-01}: intuitively, we look at particles with increasing marks $v_n$'s and we now delete every point $x_n^\circ\in\Pc^\circ$ that is at distance $<1$ of some other point $x_m^\circ$ with $v_m<v_n$ {\it that has not yet been deleted}.
\end{enumerate}
Denoting by $\Pc=\{x_n\}_n$ one of these hardcore processes, we consider the disjoint inclusions $I_n:=B_{1/2}(x_n)$.
Using the construction of action radii for such processes in~\cite[Section~3.4]{DG20b}, Theorem~\ref{th:as-perc} entails that for any $\gamma<1$ there exist $\rho,C_\gamma>0$ such that
\[\pr{\diam J_{0,\rho}>r}\,\le \,  C_\gamma\exp(-\tfrac1{C_\gamma}r^\gamma)  \qquad\text{for all $r>0$},\]
and we conjecture that the same holds with $\gamma=1$. Many variations can be considered around Matérn's thinning procedures and we can further consider associated inclusion models with random radii or associated tessellations as above: the present result is easily adapted to all such processes.
\smallskip\item \emph{Random parking process:} The random parking process $\Pc=\{x_n\}_n$ is obtained as the jamming limit of the Matérn III process, see~\cite{Penrose-01}, and we consider the corresponding disjoint inclusions $I_n:=B_{1/2}(x_n)$.
Using the construction of action radii in~\cite[Section~3.3]{DG20b}, Theorem~\ref{th:as-perc} entails that for any $\gamma<1$ there exist $\rho,C_\gamma>0$ such that
\[\pr{\diam J_{0,\rho}>r}\,\le \,  C_\gamma\exp(-\tfrac1{C_\gamma}r^\gamma)  \qquad\text{for all $r>0$},\]
and we again conjecture that the same holds with $\gamma=1$.
\end{enumerate}

\subsection{Effective viscosity problem}
We turn to the study of the effective viscosity problem based on the above percolation results of Section~\ref{sec:perco-results}. Denote by $d\ge 2$ the spatial dimension.
Since inclusions now represent rigid particles suspended in a fluid, we further assume that they are disjoint:

\begin{asn}{Hard}{Hardcore condition}\label{Hh}$ $\\
For all $n\ne m$ we have
$I_n\cap I_m=\varnothing$ almost surely,
that is, inclusions may touch but may not overlap.
\qedhere
\end{asn}

We consider the homogenization problem for the Stokes system in a bounded Lipschitz domain $U\subset\R^d$ in presence of a stationary random suspension of small rigid particles. More precisely, given a random inclusion process $\Ic=\bigcup_nI_n$ satisfying Assumptions~\ref{Hd},~\ref{Hh}, and~\ref{Hd'} for some $\rho,\kappa>0$, we denote by
\[\Ic_\e(U)\,:=\,\bigcup_{n\in\Nc_\e(U)}\e I_n\]
the set of small rigid particles in the domain $U$, with characteristic size $\e>0$, where for some boundary layer $\theta_\e>0$ we let $\Nc_\e(U)\subset\N$ be some random subset of indices such that
\[\big\{n\,:\,\dist(\e I_n,\partial U)\ge\theta_\e\big\}~~\subset~~\Nc_\e(U)~~\subset~~\big\{n\,:\,\e I_n\subset U\big\},\]
with $\theta_\e\to0$ as $\e\downarrow0$.
For such a set~$\Ic_\e(U)$ of small rigid particles in $U$, given an internal force~$f\in\Ld^2(U)^d$, the associated Stokes problem takes form of the following variational problem\footnote{This minimization problem is clearly well-posed: due to the rigidity constraint the functional is equivalent to~$\int_U 2|\!\D(v)|^2 - f\cdot v\,\mathds1_{U\setminus\Ic_\e(U)}$, the latter is weakly lower semicontinuous and coercive on $H^1_0(U)^d$ by Korn's inequality, and the incompressibility and rigidity constraints are feasible and weakly closed.},
\begin{equation}\label{eq:mini-Stokes}
\inf\bigg\{\int_{U\setminus\Ic_\e(U)}|\!\D(v)|^2-f \cdot v~:~v \in H^1_0(U)^d,~\Div(v)=0,~\D(v)|_{\Ic_\e(U)}=0\bigg\},
\end{equation}
where $\D(v)$ stands for the symmetrized gradient of $v$.
We denote by $u_\e$ the unique minimizer, which represents the fluid velocity field.
In the absence of contacts between particles in the sense of Remark~\ref{rem:no-coll}, we note that the Euler--Lagrange equations for $u_\e$ take the following standard form,
\begin{equation*}
\left\{\begin{array}{ll}
-\triangle u_\e+\nabla P_\e=f,&\text{in $U\setminus\Ic_\e(U)$},\\
\Div(u_\e)=0,&\text{in $U\setminus\Ic_\e(U)$},\\
\D(u_\e)=0,&\text{in $\Ic_\e(U)$},\\
u_\e=0,&\text{on $\partial U$},\\
\int_{\e I_n}\sigma(u_\e,P_\e)\nu=0,&\text{for all $n\in\Nc_\e(U)$},\\
\int_{\e I_n}\Theta(x-\e x_n)\cdot\sigma(u_\e,P_\e)\nu=0,&\text{for all $n\in\Nc_\e(U)$ and $\Theta\in\Md^\Skew$},
\end{array}\right.
\end{equation*}
where $\sigma(u_\e,P_\e):=2\D(u_\e)-P_\e\Id$ is the Cauchy stress tensor, where $\nu$ denotes the outward unit normal vector at particle boundaries, and where $\Md^\Skew\subset\R^{d\times d}$ stands for the set of skew-symmetric matrices.

\medskip
Before addressing the homogenization limit $\e\downarrow0$ of this Stokes problem~\eqref{eq:mini-Stokes}, we start with the relevant definition of associated correctors and effective viscosity. This generalizes the results of~\cite{DG-21a,D-20} without uniform particle separation in the subcritical percolation framework of~\ref{Hd'};  the proof, which follows from~\cite{DG-21a} together with a simple approximation argument as in~\cite{D-20},  is postponed to Section~\ref{sec:cor}. 
The definition~\eqref{eq:def-B} of the effective viscosity $\Bb$ is the starting point for the justification of  Einstein's effective viscosity formula in the dilute limit in~\cite{DG-21b} in the same percolation context.

\begin{prop1}[Correctors for the effective viscosity problem]\label{prop:cor}
Assume that the inclusion process $\Ic$ satisfies~\ref{Hd}, \ref{Hh}, and~\ref{Hd'} for some $\rho>0$ and some $\kappa$ large enough. For all $E\in\Md_0^\Sym$ (the set of trace-free symmetric matrices), there exists a unique minimizer~$\D(\psi_E)$ of the variational problem
\begin{multline}\label{eq:cor-variat}
\inf\Big\{\expec{|\!\D(\psi)+E|^2}~:~\psi\in\Ld^2(\Omega;H^1_\loc(\R^d)^d),~\nabla\psi~\text{stationary},\\
\Div(\psi)=0,~(\D(\psi)+E)|_{\Ic}=0,~\expec{\nabla\psi}=0\Big\},
\end{multline}
and the minimum value defines a positive-definite symmetric linear map $\Bb$ on $\Md^\Sym_0$, that is, the so-called \emph{effective viscosity},
\begin{equation}\label{eq:def-B}
E:\Bb E\,:=\,\expec{|\!\D(\psi_E)+E|^2}.
\end{equation}
Given $\D(\psi_E)$, the field $\psi_E\in\Ld^2(\Omega;H^1_\loc(\R^d)^d)$ is itself uniquely defined with anchoring $\int_B\psi_E=0$ and $\int_B\nabla\psi_E\in\Md_0^\Sym$.
In particular, $\psi_E$ is sublinear at infinity in the following sense: almost surely, as $\e\downarrow0$,
\begin{equation}\label{eq:conv-cor}
\begin{array}{rll}
(\nabla\psi_E)(\tfrac\cdot\e)~\cvf~0&\text{weakly}&\text{in $\Ld^2_\loc(\R^d)^{d\times d}$},\\
\e\psi_E(\tfrac\cdot\e)~\to~0&\text{strongly}&\text{in $\Ld^q_\loc(\R^d)^d$,~~~~~~~for all $q<\frac{2d}{d-2}$}.
\end{array}
\qedhere
\end{equation}
\end{prop1}

With these objects at hand, we may now state our homogenization result for the Stokes problem~\eqref{eq:mini-Stokes}; the proof is postponed to Section~\ref{sec:Gamma}.
This generalizes the results of~\cite{DG-21a,D-20} without uniform particle separation in the subcritical percolation framework of~\ref{Hd'}.
Previous results on this homogenization problem~\cite{DG-21a,D-20} were based on the oscillating test function method or on the div-curl lemma, but those PDE approaches cannot be adapted to the present setting due to the poor control of the pressure near particle contact points. Instead, we appeal to a novel $\Gamma$-convergence argument, which allows to completely bypass pressure degeneracy issues.

\begin{theor1}[Homogenization for effective viscosity problem]\label{th:homog}
Let the assumptions of Proposition~\ref{prop:cor} hold for some $\kappa$ large enough.
Given $f\in\Ld^2(U)^d$, the sequence $(u_\e)_\e$ of minimizers of the Stokes problem~\eqref{eq:mini-Stokes} almost surely converges weakly in $H^1_0(U)^d$ as~$\e\downarrow0$ to the unique minimizer~$\bar u$ of the effective variational problem
\begin{equation}\label{eq:mini-Stokes-hom}
\inf\bigg\{\int_U \D(v): \Bb \D(v)-(1-\lambda) f \cdot v~:~v \in H^1_0(U)^d,~\Div(v)=0\bigg\},
\end{equation}
in terms of the effective viscosity $\Bb$ defined in~\eqref{eq:def-B} and of the particle volume fraction $\lambda:=\expec{\mathds1_{\Ic}}$.
Equivalently, the limit $\bar u$ is the unique solution of the effective Stokes system
\begin{equation}\label{eq:Stokes-hom}
\left\{\begin{array}{ll}
-\nabla\cdot2\Bb\D(\bar u)+\nabla\bar P=(1-\lambda)f,&\text{in $U$},\\
\Div(\bar u)=0,&\text{in $U$},\\
\bar u=0,&\text{on $\partial U$}.
\end{array}\right.\qedhere
\end{equation}
\end{theor1}

\subsection{Homogenization of linear elasticity with stiff inclusions}\label{sec:elasticity}
We turn to the homogenization problem for compressible or incompressible linear elasticity with unbounded stiffness in dimensions $d\ge2$. Given a random symmetric $4$-tensor coefficient field $\Aa$ on~$\R^d$, representing the stiffness of the material, given a bounded Lipschitz domain~$U\subset\R^d$, and given an internal force $f\in\Ld^2(U)^d$, we consider the random families~$(u_\e^1)_\e$ and $(u_\e^2)_\e$ of minimizers of the associated energy functionals for compressible and incompressible linear elasticity, respectively,
\begin{eqnarray}
\hspace{-0.6cm}&&\inf\bigg\{\int_{U}\tfrac12 \D(v^1):\Aa (\tfrac\cdot\e)\D(v^1)-f \cdot v^1~:~v^1 \in H^1_0(U)^d\bigg\},\label{eq:ellPDE}
\\
\hspace{-0.6cm}&\text{and}&\inf\bigg\{\int_{U}\tfrac12 \D(v^2):\Aa (\tfrac\cdot\e)\D(v^2)-f \cdot v^2~:~v^2 \in H^1_0(U)^d,~\Div(v^2)=0\bigg\}.\label{eq:ellPDE2}
\end{eqnarray}
We assume that the coefficient field $\Aa$ is uniformly elliptic, but that it may be unbounded, corresponding to stiff zones. The case of a stiffness tensor $\Aa$ that is not uniformly elliptic could be considered as well, thus allowing for soft zones, but such zones are in fact easier to deal with, and we refer the reader to~\cite[Section~8.1]{JKO94}.

\begin{asn}{H$'$}{General conditions}\label{Kd}$ $
\begin{enumerate}[$\bullet$]
\item \emph{Stationarity and ergodicity:}\\
The random coefficient field $\Aa:\R^d\times\Omega\to\R^{d\times d\times d\times d}$ is stationary and ergodic.
\smallskip\item \emph{Uniform ellipticity:}\\
For all $E \in \Md^\Sym$ (or  $E \in \Md^\Sym_0$ in the incompressible case)
and all $x\in \R^d$, we have almost surely
\[E: \Aa(x) E  \ge |E|^2.\qedhere\]
\end{enumerate}
\end{asn}

As $\Aa$ is unbounded, we need some geometric control on regions where $\Aa$ is ``too'' large, and we shall exploit the same percolation arguments as in the previous section.
Given a threshold parameter~$\lambda>0$, we consider the connected components $\{I_{n,\lambda}\}_n$ of the random stationary set $\{x\in\R^d:|\Aa(x)|>\lambda\}$, and we define $\Ic_\lambda:=\bigcup_nI_{n,\lambda}$.
These inclusions may however be very close to one another, which we further need to control.
For that purpose, given another threshold parameter $\rho>0$, we consider the family $\{K_{p,\lambda,\rho}\}_p$ of connected components of the fattened set $\Ic_\lambda+\rho B$, and we then define the corresponding clusters
\[J_{p,\lambda,\rho}\,:=\,\bigcup_{I_{n,\lambda}\subset K_{p,\lambda,\rho}}I_{n,\lambda}.\]
For convenience, we choose the label $n=0$ such that $I_{0,\lambda}$ is the inclusion that is the closest to the origin, and we choose $p=0$ such that the cluster $J_{0,\lambda,\rho}$ contains $I_{0,\lambda}$.
In these terms, we shall require that the law of the diameter of clusters has some algebraic decay.

\begin{asn}{Clust$'_{\!\lambda,\rho,\kappa}$}{Cluster decay condition of order $\kappa>0$ at scales $\lambda,\rho>0$}\label{Kd'}$ $\\
There exists a constant $C_0$ such that
\[\pr{\diam J_{0,\lambda,\rho}>r}\,\le\, C_0\,r^{-\kappa}\qquad\text{for all $r>0$.}\qedhere\]
\end{asn}

Before addressing the homogenization limit $\e\downarrow0$ of~\eqref{eq:ellPDE} and~\eqref{eq:ellPDE2}, we start with the relevant definition of associated correctors and effective stiffness; the proof is postponed to Section~\ref{sec:stiff}.
Note that the proof is simpler and the assumptions are milder in the compressible case (we only require~$\kappa>2$ instead of $\kappa$ large enough in that case).

\begin{prop1}[Correctors for linear elasticity with unbounded stiffness]\label{prop:cor-ell}
Assume that the random coefficient field $\Aa$ satisfies~\ref{Kd} and~\ref{Kd'} for some $\lambda,\rho>0$, for some~$\kappa>2$ in the compressible case and for some~$\kappa$ large enough in the incompressible case.
\begin{enumerate}[(i)]
\item \emph{Compressible case:}\\
For all $E\in\Md^\Sym$ (the set of symmetric matrices), there exists a unique minimizer~$\D(\varphi_E^1)$ of the variational problem
\begin{multline}\label{eq:corr-phi1}
\qquad\inf\Big\{\expec{(\D(\phi)+E):\Aa(\D(\phi)+E)}~:~\phi\in\Ld^2(\Omega;H^1_\loc(\R^d)^d),\\
~\nabla\phi~\text{stationary},
~\expec{\nabla \phi}=0\Big\},
\end{multline}
and the minimum value defines a symmetric positive-definite $4$-tensor $\bar\Aa^1$ on $\Md^\Sym$, that is, the so-called compressible \emph{effective stiffness tensor},
\[E:\bar\Aa^1 E\,:=\,\expec{(\D(\varphi_E^1)+E):\Aa(\D(\varphi_E^1)+E)}.\]
\item \emph{Incompressible case:}\\
For all $E\in\Md^\Sym_0$, there exists a unique minimizer~$\D(\varphi_E^2)$ of the variational problem~\eqref{eq:corr-phi1} where we add the incompressibility constraint $\Div(\phi)=0$. The minimum value defines another symmetric positive-definite $4$-tensor $\bar\Aa^2$ on $\Md^\Sym_0$, that is, the so-called incompressible \emph{effective stiffness tensor},
\[E:\bar\Aa^2 E\,:=\,\expec{(\D(\varphi_E^2)+E):\Aa(\D(\varphi_E^2)+E)}.\]
\end{enumerate}
In both cases, the fields $\varphi_E^1,\varphi_E^2\in\Ld^2(\Omega;H^1_\loc(\R^d)^d)$ are themselves uniquely defined with anchoring $\int_B\varphi_E^1=\int_B\varphi_E^2=0$ and $\int_B\nabla\varphi_E^1,\int_B\nabla\varphi_E^2\in\Md^\Sym$, and they are sublinear at infinity as $\psi_E$ in~\eqref{eq:conv-cor}.
\end{prop1}

With these objects at hand, we may now state the homogenization result for compressible and incompressible linear elasticity problems~\eqref{eq:ellPDE} and~\eqref{eq:ellPDE2}
(which extends the classical results of \cite[Chapter~12]{JKO94}); the proof is postponed to Section~\ref{sec:stiff}. Again, the proof is drastically simpler and the assumptions are milder in the compressible case (we only require~$\kappa>d$ instead of $\kappa$ large enough in that case).

\begin{theor1}[Homogenization for linear elasticity with unbounded stiffness]\label{th:deg-ell}
Let the assumptions of Proposition~\ref{prop:cor-ell} hold for some $\kappa>d$ in the compressible case and for some~$\kappa$ large enough in the incompressible case.
Given $f\in\Ld^2(U)^d$, the sequences $(u_\e^1)_\e$ and $(u_\e^2)_\e$ of solutions of the compressible and incompressible linear elasticity problems~\eqref{eq:ellPDE} and~\eqref{eq:ellPDE2} converge weakly in $H^1_0(U)^d$ as $\e\downarrow0$ to the unique solutions $\bar u^1$ and $\bar u^2$ of the effective problems
\[\left\{\begin{array}{ll}
-\nabla\cdot\bar\Aa^1\D(\bar u^1)=f,&\text{in $U$},\\
\bar u^1=0,&\text{on $\partial U$},
\end{array}\right.
\qquad
\left\{\begin{array}{ll}
-\nabla\cdot\bar\Aa^2\D(\bar u^2)+\nabla\bar P^2=f,&\text{in $U$},\\
\Div(\bar u^2)=0,&\text{in $U$},\\
\bar u^2=0,&\text{on $\partial U$},
\end{array}\right.\]
in terms of the effective stiffness tensors $\bar\Aa^1$ and $\bar\Aa^2$ defined in Proposition~\ref{prop:cor-ell}.
\end{theor1}

\subsection*{Notation}
\begin{enumerate}[\quad$\bullet$]
\item For vector fields $u,u'$ and matrix fields $T,T'$, we set $(\nabla u)_{ij}=\nabla_ju_i$, $(\D(u))_{ij}=\frac12(\nabla_ju_i+\nabla_iu_j)$, $(\Div(T))_i=\nabla_jT_{ij}$, $T:T'=T_{ij}T'_{ij}$, $(u\otimes u')_{ij}=u_iu'_j$, where we systematically use Einstein's summation convention on repeated indices. For a matrix~$E$, we write {$\partial_Eu=E:\nabla u$}.
\smallskip\item We denote by $\Md^\Sym\subset\R^{d\times d}$ the set of symmetric matrices, by $\Md_0^\Sym$ the set of symmetric trace-free matrices, and by $\Md^\Skew$ the set of skew-symmetric matrices.
\smallskip\item The ball centered at $x$ of radius $r$ in $\R^d$ is denoted by $B_r(x)$, and we simply write $B(x)=B_1(x)$, $B_r=B_r(0)$, and $B=B_1(0)$. The unit cube centered at $z$ is similarly denoted by $Q(z)$.
\smallskip\item We denote by $C>0$ any constant (or exponent) that only depends on the dimension~$d$ and on the parameters appearing in the different assumptions. The value of $C$ is allowed to change from one line to another.
We use the notation $\lesssim$ (resp.~$\gtrsim$) for $\le C\times$ (resp.~$\ge\frac1C\times$) up to such a multiplicative constant~$C$. We add subscripts to $C,\lesssim,\gtrsim$ to indicate dependence on other parameters.
\end{enumerate}

\section{Cluster estimates in subcritical continuum percolation}\label{sec:percol}

This section is dedicated to the proof of Theorem~\ref{th:as-perc}.
We start with a general continuum percolation setting and postpone the proof of Theorem~\ref{th:as-perc} to Section~\ref{sec:proof-mainpercol}.
Let~$\eta$ be a $\Z^d$-stationary random field on $\R^d$ with $\eta|_{Q(z)}\in\{0,1\}$ almost surely for all $z\in\Z^d$.
We are interested in the size of connected components of $\eta^{-1}(\{1\})$ in the regime of subcritical percolation.
For $A,A'\subset\R^d$, we consider the event $\{A\leftrightarrow A'\}$ that there exists a chain of adjacent cubes with value $\eta=1$ such that the chain starts at a cube intersecting~$A$ and ends up at a cube intersecting~$A'$, and we consider the connectivity probability $\pr{\{0\}\leftrightarrow \partial B_r}$.
If~$\eta$ is a Bernoulli process, then the latter is known to decay exponentially in the distance~$r$, e.g.~\cite[Chapter~5]{Grimmett}.
If $\eta$ has a finite range of dependence, then stochastic domination applies and yields the same conclusion~\cite{LSS-97}.
Nevertheless, it remains an open question whether the same holds if $\eta$ only satisfies a strong mixing condition.
For the Voronoi percolation model and the Poisson Boolean model, this was recently settled  in~\cite{DCRT-19,DCRT-20}. We consider here a more general setting of correlated fields for which the decay of connectivity probabilities can be estimated nearly-optimally. Starting point is the following notion of {\it dependence radii}.

\begin{defin}\label{def:dep-rad}
For $z\in\Z^d$ and $r>0$, a \emph{dependence radius} for $\eta$ on $B_r(z)$, if it exists, is defined as a random variable $\Rc_{r}^z$ such that for all $\ell>0$ the restrictions $\eta|_{B_{r}(z)}$ and $\eta|_{\R^d\setminus B_{r+\ell}(z)}$ are independent when conditioned on the event that $\Rc_{r}^z\le\ell$.
\end{defin}

In order to construct dependence radii for a large class of examples of correlated fields, we relate it to the following notion of {\it action radii} that we introduced in~\cite{DG20b}.

\begin{defin}\label{eq:actionrad}
Let $\eta$ be measurable with respect to a collection $\Xc=(\Xc_y)_{y\in\Z^d}$ of iid random elements $\Xc_y$'s with values in a measurable space $M$.
Given an iid copy $\Xc'$ of~$\Xc$, an \emph{action radius} for $\eta$ with respect to $\Xc,\Xc'$ at any point $z\in\Z^d$, if it exists, is defined as a nonnegative $\sigma(\Xc,\Xc')$-measurable random variable $R^{z}$ such that almost surely,
\[\eta(\Xc^{z})|_{\R^d\setminus B_{R^z}(z)}\,=\,\eta(\Xc)|_{\R^d\setminus B_{R^z}(z)},\]
where the perturbed collection $\Xc^{z}=(\Xc^z_y)_{y\in\Z^d}$ is given by $\Xc^{z}_z:=\Xc'_z$ and $\Xc^{z}_y:=\Xc_y$ for all~$y\ne z$.
\end{defin}

The following lemma draws the link between dependence radii and action radii.
As a consequence, using our construction of action radii in~\cite{DG20b}, this allows us to construct and estimate dependence radii for various models of interest, including Poisson Boolean models, Voronoi percolation models, Matérn hardcore processes, and the random parking process.

\begin{lem}\label{lem:dep/act-rad}
Let $\eta$ be measurable with respect to a collection $\Xc=(\Xc_y)_{y\in\Z^d}$ of iid random elements $\Xc_y$'s with values in a measurable space $M$.
If there exists an action radius $R^{z}$ for~$\eta$ at any point~$z\in\Z^d$,
then $\eta$ admits a dependence radius on $B_r(z)$ for any $z,r$, which is given by
\begin{equation}\label{eq:depend-rad}
\Rc_r^z\,:=\,\inf\Big\{\ell>0\,:\,R^{y}\le(|z-y|-r)\vee(r+\ell-|z-y|),~\forall y\in\Z^d\Big\}.
\end{equation}
In particular, the following implications hold: given $C_0,\kappa>0$,
\begin{enumerate}[(i)]
\item If $\prmm{R^{z}>\ell}\le C_0\,\ell^{-(\kappa+d)}$ for all $\ell$,\\
then $\exists C_1$ such that $\prmm{\Rc_r^{z}>\ell}\le C_1(1+\tfrac r\ell)^{d-1}\ell^{-\kappa}$ \text{for all $r,\ell$}.
\smallskip
\item If $\prmm{R^{z}>\ell}\le C_0\exp(-\tfrac1{C_0}\ell^{\kappa})$ for all $\ell$,\\
then $\exists C_1$ such that $\prmm{\Rc_r^{z}>\ell}\le C_1(1+\tfrac r\ell)^{d-1}\exp(-\tfrac1{C_1}\ell^{\kappa})$ for all $r,\ell$.\qedhere
\end{enumerate}
\end{lem}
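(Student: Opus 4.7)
The plan is to prove the two assertions of the lemma in sequence: first, that the random variable $\Rc_r^z$ defined by~\eqref{eq:depend-rad} qualifies as a dependence radius for~$\eta$ on~$B_r(z)$ in the sense of Definition~\ref{def:dep-rad}; second, the quantitative tail bounds~(i) and~(ii). The first point is conceptual and relies only on how the expression~\eqref{eq:depend-rad} encodes the geometry, while the second is a computation built on a union bound over the iid sites.

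For the dependence-radius property, the expression~\eqref{eq:depend-rad} is engineered so that, on the event $\{\Rc_r^z\le\ell\}$, every lattice site $y\in\Z^d$ satisfies at least one of the bounds $R^y\le|z-y|-r$ or $R^y\le r+\ell-|z-y|$. In the first case $B_{R^y}(y)\cap B_r(z)=\varnothing$, and the action-radius property of~$R^y$ ensures that resampling $\Xc_y$ does not alter $\eta|_{B_r(z)}$; in the second case $B_{R^y}(y)\subset B_{r+\ell}(z)$, and resampling $\Xc_y$ does not alter $\eta|_{\R^d\setminus B_{r+\ell}(z)}$. This yields, conditionally on~$\{\Rc_r^z\le\ell\}$, a measurable partition $\Z^d=Y_{\mathrm{in}}\sqcup Y_{\mathrm{out}}$ such that $\eta|_{B_r(z)}$ depends on~$\Xc$ only through $(\Xc_y)_{y\in Y_{\mathrm{in}}}$, while $\eta|_{\R^d\setminus B_{r+\ell}(z)}$ depends only through $(\Xc_y)_{y\in Y_{\mathrm{out}}}$. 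Conditional independence then follows from the independence of the iid factors, after iterating the single-site resampling property along any enumeration. The delicate point here is that the partition typically involves infinitely many sites, so one must verify that the simultaneous resampling is consistent; this relies on the natural stability of the radii~$R^y$ under resampling of unrelated sites, a property satisfied by the concrete constructions in~\cite{DG20b}.

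For the tail bounds, the starting point is the elementary union bound
$$\pr{\Rc_r^z>\ell}\,\le\,\sum_{y\in\Z^d}\,\pr{R^y>(|z-y|-r)\vee(r+\ell-|z-y|)}.$$
We partition the lattice into three shells according to $t:=|z-y|$: the inner region $t\in[0,r]$, where the threshold is $r+\ell-t\in[\ell,r+\ell]$; the transition shell $t\in[r,r+\ell]$, where the threshold lies in $[\ell/2,\ell]$; and the outer region $t\ge r+\ell$, where the threshold is $t-r\ge\ell$. Approximating the lattice sum by a radial integral (with volume element $\sim t^{d-1}dt$) and inserting the polynomial tail from~(i), each of the three contributions is $\lesssim r^{d-1}\ell^{-(\kappa+d-1)}+\ell^{-\kappa}$, which combines into the announced bound $(1+r/\ell)^{d-1}\ell^{-\kappa}$. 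The analogous computation with the stretched-exponential tail in~(ii) produces, after absorbing the polynomial prefactors into a slightly worsened exponential constant, the bound $(1+r/\ell)^{d-1}\exp(-\ell^\kappa/C_1)$; this is trivial for very small~$\ell$ since then $(1+r/\ell)^{d-1}\ge1$, so only the regime of not-too-small~$\ell$ requires actual work. The main obstacle throughout Step~2 is bookkeeping, particularly the correct allocation of $\ell^{d-1}$ factors between the polynomial prefactor and the exponential constant.
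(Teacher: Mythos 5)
Your proposal reproduces the paper's two-step structure, and both steps are sound. Step~1 (the dependence-radius property) is the same resampling argument as in the paper: on the event $\{\Rc_r^z\le\ell\}$ you split $\Z^d$ across a deterministic sphere (you use radius $r+\ell/2$; the paper uses $r+\ell$ after replacing $\ell$ by $2\ell$, which is cosmetic), and conclude that $\eta|_{B_r(z)}$ is not affected by the sites outside while $\eta|_{\R^d\setminus B_{r+\ell}(z)}$ is not affected by those inside, so the iid structure yields conditional independence. Step~2 is the same union bound, which you carry out in full (three radial shells) whereas the paper compresses this to ``a direct calculation''; your estimates are correct and recombine into the announced prefactor $(1+r/\ell)^{d-1}$.

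The one place where you diverge from the paper is your explicit caveat that passing from single-site resampling (the content of Definition~\ref{eq:actionrad}) to the simultaneous resampling of the infinitely many sites in $Y_{\mathrm{out}}$ or $Y_{\mathrm{in}}$ requires an iteration that is not quite immediate, since the action radii $R^y$ at other sites are themselves functions of the whole configuration and may change along the way. The paper simply states the entailment without comment. Your observation is legitimate — this is indeed a genuine subtlety of the action-radius framework — and your proposed resolution (that the concrete constructions in~\cite{DG20b} produce radii that are suitably stable under resampling at other sites) is the intended one. Be aware, though, that this means your proof as written is conditional on an auxiliary property of the $R^y$'s that the lemma statement itself does not record; in the paper this is implicit in the definition of action radii being ``usable'' in the renormalization of Section~3. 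With that caveat spelled out, your argument matches the paper's.
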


Next, based on this notion of dependence radii, we establish the following estimate on connectivity probabilities in the subcritical regime. As opposed to the (simpler) case of the Voronoi percolation model and of the Poisson Boolean model treated in~\cite{DCRT-19,DCRT-20},
we emphasize that the present argument does not allow  to prove exponential decay (see restriction~$\kappa<\kappa(\alpha)$ in item~(ii) where $\kappa(\alpha)\uparrow1$ as $\alpha\downarrow0$).

\begin{theor}\label{th:percol}
Assume that the random field $\eta$ admits a dependence radius in the sense of Definition~\ref{def:dep-rad}.
For any $\alpha<\frac12$, there exists a universal constant $\e_\alpha>0$ such that,
provided we have
\[\pr{B_{r_0/2}\leftrightarrow \partial B_{r_0}}<\e_\alpha\qquad\text{for some $r_0>0$},\]
the following implications hold: given $C_0,\kappa>0$,
\begin{enumerate}[(i)]
\item If $\pr{\Rc_r^z>2\alpha r}\le C_0r^{-\kappa}$ for all $r$,\\
then $\exists C_1$ such that $\pr{\{0\}\leftrightarrow\partial B_r}\le C_1r^{-\kappa}$ for all $r$.
\smallskip
\item If $\pr{\Rc_r^z>2\alpha r}\le C_0 \exp(-\frac1{_0}r^{\kappa})$ for all $r$, with $\kappa<\kappa(\alpha):=\frac{\log2}{\log 2-\log(1-2\alpha)}$,\\
then $\exists C_1$ such that $\pr{\{0\}\leftrightarrow\partial B_r}\le C_1\exp(-\frac1{C_1}r^{\kappa})$ for all $r$.
\qedhere
\end{enumerate}
\end{theor}

\subsection{Proof of Lemma~\ref{lem:dep/act-rad}}
We split the proof into two steps, first showing that $\Rc_r^z$ in~\eqref{eq:depend-rad} defines a dependence radius and then checking the bounds~(i) and~(ii).

\medskip
\step1 Proof that $ \Rc^z_r$ is a dependence radius.\\
By definition of action radii, cf.~Definition~\ref{eq:actionrad}, for all $z\in\Z^d$ and $\ell>0$, the condition~\mbox{$R^z\le \ell$} entails that $\eta|_{\R^d\setminus B_\ell(z)}$ is a function of $(\Xc_y)_{y:y\ne z}$.
Given $\Rc_r^z$ defined in~\eqref{eq:depend-rad}, conditioning on the event that $\Rc_r^z\le2\ell$, we get $R^{y}\le|z-y|-r$ for all $y\in\Z^d\setminus B_{r+\ell}(z)$ and $R^{y}\le r+\ell-|z-y|$ for all $y\in \Z^d\cap B_{r+\ell}(z)$, which entails that $\eta|_{B_r(z)}$ is a function of $(\Xc_y)_{y\in \Z^d\cap B_{r+\ell}(z)}$ and that $\eta|_{\R^d\setminus B_{r+2\ell}(z)}$ is a function of $(\Xc_y)_{y\in\Z^d\setminus B_{r+\ell}(z)}$. As $(\Xc_y)_{y}$ is an iid sequence, this implies that $\eta|_{B_r(z)}$ and $\eta|_{\R^d\setminus B_{r+2\ell}(z)}$ are independent, thus showing that $\Rc_r^z$ is a dependence radius for~$\eta$ on~$B_r(z)$ in the sense of Definition~\ref{def:dep-rad}.

\medskip
\step2 Union bound.\\
Starting from~\eqref{eq:depend-rad}, a union bound yields
\begin{multline*}
\pr{\Rc^z_r>2\ell}\,\le\,\sum_{y\in\Z^d} \prm{R^y>(|z-y|-r)\vee(r+2\ell-|z-y|)}\\
\,\le\,\sum_{y\in\Z^d\setminus B_{r+\ell}(z)} \prm{R^{y}>|z-y|-r}+\sum_{y\in \Z^d\cap B_{r+\ell}(z)} \prm{R^{y}>r+2\ell-|z-y|},
\end{multline*}
and the bounds~(i) and~(ii) follow from a direct calculation.\qed

\subsection{Proof of Theorem~\ref{th:percol}}
The proof is based on a buckling argument starting from a renormalization inequality as inspired by a recent work of Duminil-Copin, Raoufi, and Tassion~\cite{DCRT-20}.
For all $r>0$ and~$\alpha\in(0,1)$, we consider connectivity probabilities
\[\theta_r^\alpha\,:=\,\pr{B_{\alpha r}\leftrightarrow\partial B_r}.\]

\begin{lem}[Renormalization inequality]\label{lem:renorm}
For all $0<\alpha<\beta<1$ and $0<\ell<(1-\beta)r$, assuming that $\pr{\Rc_{\beta r}>\ell}\le\frac12$, we have
\[\theta_r^\alpha~\le~\pr{\Rc_{\beta r}>\ell}+C(\tfrac{\beta r+\ell}{r-\beta r-\ell}\tfrac{1}{\alpha(\beta-\alpha)})^{d-1}\,\theta^\alpha_{r(1-\beta)-\ell}\,
\theta^\alpha_{r(\beta-\alpha)}.
\qedhere\]
\end{lem}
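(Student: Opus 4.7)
\medskip
\noindent\textbf{Proof proposal for Lemma~\ref{lem:renorm}.}
The plan is to run a standard two-arm decomposition at the intermediate scale $\beta r$, and to use the dependence radius $\Rc_{\beta r}$ to factorize the two arms. I would first split
\[\theta_r^\alpha\,=\,\prm{B_{\alpha r}\leftrightarrow\partial B_r,\,\Rc_{\beta r}>\ell}+\prm{B_{\alpha r}\leftrightarrow\partial B_r,\,\Rc_{\beta r}\le\ell}.\]
The first term is at most $\pr{\Rc_{\beta r}>\ell}$, which is exactly the first contribution in the statement, so the work lies in estimating the second term.

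On the event $\{\Rc_{\beta r}\le\ell\}$, the restrictions $\eta|_{B_{\beta r}}$ and $\eta|_{\R^d\setminus B_{\beta r+\ell}}$ are conditionally independent by Definition~\ref{def:dep-rad}. Next I extract two disjointly-supported crossings from the event $\{B_{\alpha r}\leftrightarrow\partial B_r\}$: walking along the chain starting from $B_{\alpha r}$, I take the last cube $c_0$ meeting $B_{\beta r}$, producing the inner event
\[A\,:=\,\bigl\{B_{\alpha r}\overset{B_{\beta r}}{\leftrightarrow}\partial B_{\beta r}\bigr\}\qquad\text{(measurable w.r.t. $\eta|_{B_{\beta r}}$)};\]
walking backwards from $\partial B_r$ I pick the first cube $c_1$ exiting $B_{\beta r+\ell}$, giving the outer event
\[A'_y\,:=\,\bigl\{B_\delta(y)\overset{\R^d\setminus B_{\beta r+\ell}}{\longleftrightarrow}\partial B_r\bigr\}\qquad\text{(measurable w.r.t. $\eta|_{\R^d\setminus B_{\beta r+\ell}}$)},\]
where $y$ is the center of $c_1$, so $y$ lies in some $\delta$-net of $\partial B_{\beta r+\ell}$. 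Choosing the covering radius $\delta:=\alpha(r(1-\beta)-\ell)$ makes the net have cardinality $\lesssim\bigl(\tfrac{\beta r+\ell}{\alpha(r(1-\beta)-\ell)}\bigr)^{d-1}$, and a union bound followed by the conditional independence (plus the hypothesis $\pr{\Rc_{\beta r}>\ell}\le\tfrac12$, which yields $\pr{\cdot\,\vert\,\Rc_{\beta r}\le\ell}\le 2\pr{\cdot}$) gives
\[\prm{B_{\alpha r}\leftrightarrow\partial B_r,\,\Rc_{\beta r}\le\ell}\,\lesssim\,\Bigl(\tfrac{\beta r+\ell}{\alpha(r(1-\beta)-\ell)}\Bigr)^{d-1}\,\pr{A}\,\max_y\pr{A'_y}.\]

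To control each factor, I use translation invariance: since $|y|=\beta r+\ell$, the set $\partial B_r$ lies outside $B_{r(1-\beta)-\ell}(y)$, and $A'_y\subset\{B_\delta(y)\leftrightarrow\partial B_{r(1-\beta)-\ell}(y)\}$, whose probability equals $\theta^\alpha_{r(1-\beta)-\ell}$ by stationarity and our choice of $\delta$. For the inner event $A$, the chain must cross the annulus $B_{\beta r}\setminus B_{\alpha r}$ of thickness $(\beta-\alpha)r$, and I perform a second union bound: covering $\partial B_{\beta r}$ by balls $B_{\delta'}(z_j)$ of radius $\delta'\simeq\alpha(\beta-\alpha)r$ (number of balls $\simeq(\tfrac{\beta}{\alpha(\beta-\alpha)})^{d-1}$), the event $\{B_{\alpha r}\leftrightarrow B_{\delta'}(z_j)\}$ is contained in $\{B_{\delta'}(z_j)\leftrightarrow\partial B_{r(\beta-\alpha)}(z_j)\}$ by translating to center $z_j$ (the far boundary is at distance $\ge r(\beta-\alpha)-\delta'$ from $z_j$), and stationarity bounds this by $\theta^\alpha_{r(\beta-\alpha)}$ up to an absolute constant. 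Combining the two covering numbers and the two $\theta$-factors produces the announced renormalization inequality.

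The main technical obstacle is the geometric bookkeeping in the inner bound: matching the target factor $\theta^\alpha_{r(\beta-\alpha)}$ requires choosing the inner cover at the scale $\alpha(\beta-\alpha)r$ while keeping the induced outer radius slightly below $r(\beta-\alpha)$, which is tight and produces the $\alpha(\beta-\alpha)$ term in the denominator. The conditional independence step and the extraction of disjointly-supported arms from a single chain are essentially the DCRT mechanism, and are what allow the product structure $\theta^\alpha_{r(1-\beta)-\ell}\theta^\alpha_{r(\beta-\alpha)}$ to arise despite working at the level of unrestricted connectivities.
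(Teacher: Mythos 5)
Your proposal is correct and follows essentially the same strategy as the paper: condition on the event $\{\Rc_{\beta r}\le\ell\}$, use the dependence radius to factorize the inner crossing of $B_{\beta r}$ from the outer crossing of $\R^d\setminus B_{\beta r+\ell}$ (with the bound $\pr{\Rc_{\beta r}\le\ell}\ge\tfrac12$ absorbing the conditioning), then cover the two relevant spheres and invoke stationarity to convert each factor into a $\theta^\alpha$-quantity. The only real deviation is bookkeeping: you cover $\partial B_{\beta r}$ for the inner arm (giving roughly $(\tfrac{\beta}{\alpha(\beta-\alpha)})^{d-1}$ balls), whereas the paper covers the smaller sphere $\partial B_{\alpha r}$ (giving $(\tfrac{1}{\beta-\alpha})^{d-1}$ balls), so your final constant carries an extra factor $(\beta/\alpha)^{d-1}$ relative to the one stated in the lemma --- harmless in the application where $\alpha$ is fixed, and fixed by switching the inner cover to $\partial B_{\alpha r}$, but worth noting if you want to reproduce the stated constant verbatim. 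The ``walking along the chain'' and ``disjointly-supported arms'' framing is heavier than what is actually needed: the paper simply uses the trivial event inclusion $\{B_{\alpha r}\leftrightarrow\partial B_r\}\subset\{B_{\alpha r}\leftrightarrow\partial B_{\beta r}\}\cap\{B_{\beta r+\ell}\leftrightarrow\partial B_r\}$, applies the conditional-independence factorization once, and only then does the covering on each factor separately --- which is the same conclusion with fewer moving parts.
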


\begin{proof}
Conditioning with respect to the event that $\Rc_{\beta r}\le\ell$, and using the property of the dependence radius, see Definition~\ref{def:dep-rad}, we find
\begin{eqnarray*}
\theta_r^\alpha&\le&\pr{\Rc_{\beta r}>\ell}+\pr{B_{\alpha r}\leftrightarrow\partial B_r,~\text{and}~\Rc_{\beta r}\le\ell}\\
&\le&\pr{\Rc_{\beta r}>\ell}+\pr{B_{\alpha r}\leftrightarrow\partial B_{\beta r},~B_{\beta r+\ell}\leftrightarrow\partial B_r,~\text{and}~\Rc_{\beta r}\le\ell}\\
&=&\pr{\Rc_{\beta r}>\ell}+\pr{B_{\alpha r}\leftrightarrow\partial B_{\beta r},~B_{\beta r+\ell}\leftrightarrow\partial B_r \,\|\,\Rc_{\beta r}\le\ell} \pr{\Rc_{\beta r}\le\ell}
\\
&=&\pr{\Rc_{\beta r}>\ell}+\pr{B_{\alpha r}\leftrightarrow\partial B_{\beta r} \,\|\,\Rc_{\beta r}\le\ell}\pr{B_{\beta r+\ell}\leftrightarrow\partial B_r \,\|\,\Rc_{\beta r}\le\ell} \pr{\Rc_{\beta r}\le\ell}
\\
&=&\pr{\Rc_{\beta r}>\ell}+\frac{\pr{B_{\alpha r}\leftrightarrow\partial B_{\beta r},~\text{and}~\Rc_{\beta r}\le\ell}\pr{B_{\beta r+\ell}\leftrightarrow\partial B_r,~\text{and}~\Rc_{\beta r}\le\ell}}{\pr{\Rc_{\beta r}\le\ell}}.
\end{eqnarray*}
Since $\pr{\Rc_{\beta r}>\ell}\le\frac12$, this yields
\begin{equation}\label{eq:theta-preest}
\theta_r^\alpha~\le~ \pr{\Rc_{\beta r}>\ell}+2\,\pr{B_{\alpha r}\leftrightarrow\partial B_{\beta r}}\pr{B_{\beta r+\ell}\leftrightarrow\partial B_r}.
\end{equation}
We now cover $\partial B_{\beta r+\ell}$ by balls $\tilde B$ of radius $\alpha (r-\beta r-\ell)$, for which we have
$$
\prmm{\tilde B \leftrightarrow \partial B_r} ~\le~ \prmm{B_{\alpha (r-\beta r-\ell)} \leftrightarrow \partial B_{r-\beta r-\ell}}=\theta^\alpha_{r-\beta r-\ell}.
$$
Since we can cover $\partial B_{\beta r+\ell}$ with at most $C(\frac{\beta r+\ell}{\alpha (r-\beta r-\ell)})^{d-1}$ such balls, this yields by a union bound,
\begin{equation}\label{eq:cover-arg}
\pr{B_{\beta r+\ell}\leftrightarrow\partial B_r}~\le~ C(\tfrac{\beta r+\ell}{\alpha (r-\beta r-\ell)})^{d-1}\,\theta^\alpha_{r-\beta r-\ell}.
\end{equation}
Similarly, covering $\partial B_{\alpha r}$ with at most $(\frac{1}{\beta-\alpha})^{d-1}$ balls of radius $\alpha r(\beta-\alpha)$, we obtain
\[\pr{B_{\alpha r}\leftrightarrow\partial B_{\beta r}}~\le~  C(\tfrac{1}{\beta-\alpha})^{d-1}\,\theta^\alpha_{r(\beta-\alpha)},\]
and the conclusion follows.
\end{proof}

Based on this renormalization inequality, we may now deduce estimates on connectivity probabilities by induction on scales.

\begin{proof}[Proof of Theorem~\ref{th:percol}]
Recall the notation $\theta_r^\alpha=\pr{B_{\alpha r}\leftrightarrow\partial B_r}$ for connectivity probabilities, and set for abbreviation $\pi_r^\alpha:=\prmm{\Rc_{r/2}>\alpha r}$.
We split the proof into three steps.

\medskip
\step1 Preliminary: for all $\alpha\in(0,1)$ there holds
\begin{equation}\label{eq:bnd-sr}
\theta^\alpha_s\le C(\tfrac sr)^{d-1}\theta^\alpha_r\qquad\text{for all $0<r< s(1-\alpha)$.}
\end{equation}
This follows from a similar covering argument as in~\eqref{eq:cover-arg}.
Indeed, given $\kappa\in(0,1)$, covering $\partial B_{\alpha s}$ with at most $C(\frac{1}{\kappa(1-\alpha)})^{d-1}$ balls of radius $\kappa\alpha s(1-\alpha)$, we find
\begin{eqnarray*}
\theta_s^\alpha\,=\,\pr{B_{\alpha s}\leftrightarrow\partial B_s}&\le&C(\tfrac{1}{\kappa(1-\alpha)})^{d-1}\,\prm{B_{\kappa\alpha s(1-\alpha)}\leftrightarrow\partial B_{s(1-\alpha)}}\\
&\le&C(\tfrac{1}{\kappa(1-\alpha)})^{d-1}\,\prm{B_{\kappa\alpha s(1-\alpha)}\leftrightarrow\partial B_{\kappa s(1-\alpha)}}\\
&=&C(\tfrac{1}{\kappa(1-\alpha)})^{d-1}\,\theta_{\kappa s(1-\alpha)}^{\alpha},
\end{eqnarray*}
and the claim~\eqref{eq:bnd-sr} follows.

\medskip
\step2 Proof that for all $\alpha\in(0,\frac12)$ there exists $\e_\alpha>0$ such that the following implication holds: for any $r_0>0$,
\begin{equation}\label{eq:impl-small}
\left.\begin{array}{ll}
\theta_{r_0}^\alpha\le\e_\alpha&\\
\pi_r^\alpha\le\frac12\e_\alpha&\text{for all $r\ge r_0$}
\end{array}\right\}
\quad\Longrightarrow\quad
\theta_r^\alpha\le C(1-2\alpha)^{1-d}\,\e_\alpha\quad\text{for all $r\ge2r_0$}.
\end{equation}
Let $\alpha\in(0,\frac12)$ be fixed, and set for abbreviation $\gamma:=\frac12-\alpha$. Applying Lemma~\ref{lem:renorm} with $\beta=\frac12$ and $\ell=\alpha r$, we find for all $r>0$, provided that $\pi_r^\alpha\le \frac12$,
\begin{equation}\label{eq:renorm-appl}
\theta_r^\alpha\,\le\, \pi_r^\alpha+ C(\alpha\gamma^2)^{1-d}\,(\theta_{\gamma r}^\alpha)^2.
\end{equation}
Choose $\e_\alpha>0$ small enough such that $C(\alpha \gamma^2)^{1-d}\,\e_\alpha\le\frac12$,
and assume that $\theta_{r_0}^\alpha\le\e_\alpha$ and that~$\pi_r^\alpha\le\frac12\e_\alpha$ for all $r\ge r_0$. 
Iterating the above inequality, we are led to
\begin{equation}\label{eq:preconcl-integer}
\theta_{r_0/\gamma^k}^\alpha\le\e_\alpha\qquad\text{for all integer $k\ge0$.}
\end{equation}
Combining this with~\eqref{eq:bnd-sr}, this yields the claim $\theta_r^\alpha\le C\gamma^{1-d}\e_\alpha$ for all $r\ge\frac1{1-\alpha}r_0$.

\medskip
\step3 Conclusion.\\
We only display the proof of~(ii) while the proof of~(i) is identical.
Let $\alpha\in(0,\frac12)$, recall the notation $\gamma=\frac12-\alpha$, and assume that $\pi_r^\alpha\le C_0\exp(-\frac1{C_0}r^{\kappa})$ for all $r$.
This decay assumption ensures that there exists $r_0$ such that $\pi_r^\alpha\le \frac12$ for all $r\ge r_0$.
The inequality~\eqref{eq:renorm-appl} then becomes for all $r\ge r_0$,
\[\theta_r^\alpha\,\le\, C_0\exp(-\tfrac1{C_0}r^{\kappa})+ C(\alpha\gamma^2)^{1-d}\,(\theta_{\gamma r}^\alpha)^2.\]
Provided $\kappa<\frac{\log2}{|\!\log\gamma|}$,
using~\eqref{eq:impl-small} as a starting point,
a direct induction argument allows to conclude for some $C_1$,
\[\theta_r^\alpha\le C_1\exp(-\tfrac1{C_1}r^{\kappa})\qquad\text{for all $r>0$}.\qedhere\]
\end{proof}

\subsection{Proof of Theorem~\ref{th:as-perc}}\label{sec:proof-mainpercol}
Let $0<\rho \le \frac12$ and $r_0\ge2$ to be fixed later, set $\rho':={\rho}/{(3\sqrt d)}$, and define a $\rho'\Z^d$-stationary random field $\eta$ on $\R^d$ by setting
\[\eta|_{Q_{\rho'}(z)}\,:=\,\left\{\begin{array}{lll}
1&:&\text{if $Q_{\rho'}(z)\cap I_n\ne\varnothing$ for some $n$ with $\displaystyle\inf_{m:m\ne n}\dist(I_n,I_m)<\rho$,}\\
0&:&\text{otherwise}.
\end{array}\right.\]
By definition of clusters $\{J_{p,\rho}\}_p$, we note that a cube of side length $\rho'$ can intersect at most one single cluster.
Covering $\partial B_{r_0/2}+B_\rho$ with $Cr_0^{d-1}$ unit balls, a union bound and the stationarity condition then yield
\begin{eqnarray}
\pr{B_{r_0/2}\leftrightarrow\partial B_{r_0}}
&\le&\pr{\exists p:J_{p,\rho}\cap(\partial B_{r_0/2}+B_\rho)\ne\varnothing,\,\diam J_{p,\rho}>\tfrac12r_0-1}\nonumber\\
&\lesssim&r_0^{d-1}\pr{\exists p:J_{p,\rho}\cap B\ne\varnothing,\,\diam J_{p,\rho}>\tfrac12r_0-1}.\label{eq:firstbnd-PBr2B2}
\end{eqnarray}
We split the proof into two steps, separately considering the two conditions of the alternative in~\ref{Hd''}.

\medskip
\step1 Conclusion under the smallness condition in~\ref{Hd''}.\\
Starting from~\eqref{eq:firstbnd-PBr2B2}, covering $B$ with $C\rho^{-d}$ cubes of side length $\rho'$, and noting that each such cube can intersect at most one single cluster, a union bound and the stationarity condition yield
\begin{eqnarray*}
\pr{B_{r_0/2}\leftrightarrow\partial B_{r_0}}
\,\lesssim\,\rho^{-d}\,r_0^{d-1}\pr{\diam J_{0,\rho}>\tfrac12r_0-1}.
\end{eqnarray*}
Therefore, for fixed $\e_\alpha>0$, if the smallness condition in~\ref{Hd''} holds with some $h_0,r_0>0$ and with $\frac1L\ll\e_\alpha$, then we deduce $\pr{B_{r_0/2}\leftrightarrow\partial B_{r_0}}<\e_\alpha$ provided that $\rho\le h_0$. In addition, in view of~\ref{HAR}, Lemma~\ref{lem:dep/act-rad} ensures the existence of dependence radii with $\pr{\Rc_r^z>\ell}\le C'(1+\frac r\ell)^{d-1}\ell^{-\kappa}$ for all $r,\ell$.
We can now apply Theorem~\ref{th:percol}(i) and the conclusion follows.

\medskip
\step2 Conclusion under the asymptotic smallness condition in~\ref{Hd''}.\\
Set $\delta':=\delta/(3\sqrt d)$. Starting from~\eqref{eq:firstbnd-PBr2B2}, noting that the interior ball condition ensures that each cluster contains at least one vertex point of $\delta'\Z^d$, a union bound and the stationarity condition yield
\begin{eqnarray*}
\pr{B_{r_0/2}\leftrightarrow\partial B_{r_0}}
\,\lesssim\,\delta^{-d}\,r_0^{d-1}\pr{\diam J_{0,\rho}>\tfrac12r_0-1}.
\end{eqnarray*}
Therefore, for fixed $\e_\alpha>0$, if the asymptotic smallness condition in~\ref{Hd''} holds with some~\mbox{$r_0>0$} and with $\frac1L\ll\e_\alpha$, then we can choose $\rho$ small enough such that we get $\pr{B_{r_0/2}\leftrightarrow\partial B_{r_0}}<\e_\alpha$, and the conclusion then follows as in Step~1.\qed

\begin{rem}\label{rem:no-coll-pr}
We prove the result stated in Remark~\ref{rem:no-coll}, claiming that the asymptotic smallness condition in~\ref{Hd''} holds in the stronger form~\eqref{eq:smallness+} whenever there is almost surely no contact between inclusions.
For that purpose, by a union bound and the stationarity condition, we find for all $r\ge1$ and~\mbox{$h\in(0,1)$},
\begin{eqnarray*}
{\pr{\diam J_{0,h}>r}}
&\le&\pr{\diam I_0>r}+\prm{\text{$\diam I_0\le r$ and $\exists m\ne 0:\dist(I_0,I_m)\le h$}}\\
&\lesssim&\pr{\diam I_0>r}+r^d\,\prm{\exists m\ne 0:\dist(I_0,I_m)\le h\text{ and }I_m\cap B\ne\varnothing}.
\end{eqnarray*}
For notational simplicity, we can assume that the enumeration of inclusions $\{I_n\}_n$ is such that almost surely $I_0$ is the inclusion that is the closest to the origin, $I_1$ is the second closest, etc., where we use the lexicographic order when needed to ensure uniqueness. The above can then be further estimated as
\begin{equation*}
\pr{\diam J_{0,h}>r}\,\lesssim\,\pr{\diam I_0>r}+r^d\,\expecM{\sum_{m=1}^\infty\mathds1_{\dist(I_0,I_m)\le h}\,\mathds1_{\sharp\{n:I_n\cap B\ne\varnothing\}> m}},
\end{equation*}
hence, by Hölder's inequality,
\begin{multline*}
\pr{\diam J_{0,h}>r}\\
\,\lesssim\,\pr{\diam I_0>r}
+r^d\,\sum_{m=1}^\infty\pr{\dist(I_0,I_m)\le h}^{1-\alpha}\pr{\sharp\{n:I_n\cap B\ne\varnothing\}> m}^\alpha.
\end{multline*}
The condition on the absence of contacts between inclusions yields \mbox{$\pr{\dist(I_0,I_m)\le h}\to0$} as $h\downarrow0$ for all $m\ne 0$.
In view of the local finiteness condition, Lebesgue's dominated convergence theorem then entails
\[\limsup_{h\downarrow0}\pr{\diam J_{0,h}>r}\,\lesssim\,\pr{\diam I_0>r},\]
and the conclusion~\eqref{eq:smallness+} follows from the control on inclusion diameters.
\end{rem}

\section{Effective viscosity problem}

This section is devoted to the proofs of Proposition~\ref{prop:cor} and Theorem~\ref{th:homog}  on the effective viscosity problem.

\subsection{Truncation result}
 Given $\rho>0$, recall that the sets $\{K_{p,\rho}=J_{p,\rho}+\rho B\}_p$ are all connected and disjoint by definition.
Yet, it might happen that for some $p$ the complement $\R^d\setminus K_{p,\rho}$ is not connected, which would in fact be highly problematic in the truncation arguments below. To circumvent this issue, we rather define the envelope $\tilde K_{p,\rho}$ of $K_{p,\rho}$ as the complement of the unbounded connected component of $\R^d\setminus K_{p,\rho}$ (this does not change the diameter). We then set
\[\tilde J_{p,\rho}\,:=\,\big\{x\in\tilde K_{p,\rho}:\dist(x,\partial \tilde K_{p,\rho})>\rho\big\},\]
and we define the fattened sets
\begin{eqnarray}\label{eq:defJ'J''}
J_{p,\rho}'&:=&\Big\{x\in \tilde J_{p,\rho}+2\rho B:\dist\big(x,\partial(\tilde J_{p,\rho}+2\rho B)\big)>\tfrac32\rho\Big\},\\
J_{p,\rho}''&:=&\Big\{x\in \tilde J_{p,\rho}+2\rho B:\dist\big(x,\partial(\tilde J_{p,\rho}+2\rho B\big)>\rho\Big\}.\nonumber
\end{eqnarray}
As by definition the sets $\{\tilde K_{p,\rho}=\tilde J_{p,\rho}+\rho B\}_p$ are all disjoint and satisfy an interior ball condition with radius~$\rho$, it is easily checked that the fattened sets $\{J_{p,\rho}''\}_p$ are disjoint as well.
For all $p$, we note that
\[J_{p,\rho}\subset\tilde{J}_{p,\rho}\subset J_{p,\rho}'\subset J_{p,\rho}'',\]
and the following properties are direct consequences of~\eqref{eq:defJ'J''}, using that \mbox{$\tilde J_{p,\rho}+\rho B$} has connected complement:
\begin{enumerate}[---]
\item
$J_{p,\rho}'$ is connected, has connected complement, and satisfies an interior ball condition with radius $\frac32\rho$ and an exterior ball condition with radius $\frac12\rho$;
\item
$J_{p,\rho}''$ is connected, has connected complement, and satisfies an interior ball condition with radius $\rho$ and an exterior ball condition with radius $\rho$.
\end{enumerate}
These properties ensure in particular that for all $p$ the set $J_{p,\rho}''\setminus J_{p,\rho}'$ is connected and satisfies an interior ball condition with radius $\frac12\rho$ (which entails, in particular, that it is a John domain, cf.~below).

With these appropriate definitions at hand for cluster neighborhoods, we can now state the following truncation result around clusters.

\begin{lem}[Truncation around clusters]\label{lem:trunc}
Let $\Ic$ satisfy~\ref{Hd} and~\ref{Hh}, and let $\rho>0$ be fixed.
For all $p$ there exists a truncation operator~$T_{p,\rho}$ such that for all divergence-free fields $g\in C^1_b(J_{p,\rho}'')^d$ the truncation \mbox{$T_{p,\rho}[g]\in H^1_0(J_{p,\rho}'')^d$} satisfies
\[\Div(T_{p,\rho}[g])=0,\qquad\D(T_{p,\rho}[g])|_{J_{p,\rho}'}=\D(g)|_{J_{p,\rho}'},\]
and
\[\|T_{p,\rho}[g]\|_{H^1(J_{p,\rho}'')}\,\lesssim_\rho\,(1+\diam J_{p,\rho})^C\,\|\!\D(g)\|_{\Ld^2(J_{p,\rho}'')}.\qedhere\]
\end{lem}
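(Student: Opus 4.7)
\medskip

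\noindent\textbf{Proof plan.} The strategy combines three classical ingredients: a smooth cutoff localizing $g$ to a thin neighborhood of the cluster, a Korn rigid-motion correction so that $\|\cdot\|_{H^1}$ norms can be traded for $\|\D(\cdot)\|_{L^2}$, and a Bogovskii operator on the annular shell
$$A_{p,\rho}\,:=\,J_{p,\rho}''\setminus\overline{J_{p,\rho}'}$$
to absorb the divergence created by the cutoff. The key observation is that, thanks to the geometric properties established just before the lemma, $J_{p,\rho}''$ and $A_{p,\rho}$ are both connected and satisfy an interior ball condition of radius comparable to $\rho$, hence they are John domains with a John constant depending only on $\rho$ and $\diam J_{p,\rho}$. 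This puts us exactly in the setting of~\cite{ADM-06}.

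\medskip

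\noindent\textbf{Construction.} First, I would pick a smooth cutoff $\chi$ with $\chi\equiv1$ on $\overline{J_{p,\rho}'}$, $\chi\equiv0$ outside $J_{p,\rho}''$, and $|\nabla\chi|\lesssim 1/\rho$; this is possible since $\dist(\overline{J_{p,\rho}'},\partial J_{p,\rho}'')\ge\rho/2$ by~\eqref{eq:defJ'J''}. Next, applying Korn's inequality on the connected domain $J_{p,\rho}''$, I select an infinitesimal rigid motion $r(x)=Ax+b$ (with $A$ skew-symmetric) such that
$$\|g-r\|_{H^1(J_{p,\rho}'')}\,\le\,C_K(J_{p,\rho}'')\,\|\D(g)\|_{L^2(J_{p,\rho}'')}.$$
Since rigid motions are divergence-free and have vanishing symmetric gradient, replacing $g$ by $g-r$ preserves both $\Div(g)=0$ and the value of $\D(g)$, so we may work with $g-r$ throughout. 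I then set $h:=\chi(g-r)$, which is supported in $\overline{J_{p,\rho}''}$ and whose divergence $\Div(h)=\nabla\chi\cdot(g-r)$ is supported in $A_{p,\rho}$. The compatibility condition for Bogovskii on $A_{p,\rho}$ is automatic:
$$\int_{A_{p,\rho}}\nabla\chi\cdot(g-r)\,=\,\int_{A_{p,\rho}}\Div(h)\,=\,-\int_{\partial J_{p,\rho}'}(g-r)\cdot\nu\,=\,-\int_{J_{p,\rho}'}\Div(g-r)\,=\,0,$$
using that $\chi=1$ on $\partial J_{p,\rho}'$ and $\chi=0$ on $\partial J_{p,\rho}''$. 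The John-domain Bogovskii theory of~\cite{ADM-06} then provides $w\in H^1_0(A_{p,\rho})^d$ with $\Div(w)=\Div(h)$ and $\|\nabla w\|_{L^2(A_{p,\rho})}\le C_B(A_{p,\rho})\,\|\nabla\chi\cdot(g-r)\|_{L^2(A_{p,\rho})}$. Defining
$$T_{p,\rho}[g]\,:=\,\chi(g-r)-w,$$
with $w$ extended by zero, produces a field in $H^1_0(J_{p,\rho}'')^d$ that is divergence-free, coincides with $g-r$ on $J_{p,\rho}'$ (so $\D(T_{p,\rho}[g])=\D(g)$ there), and satisfies
$$\|T_{p,\rho}[g]\|_{H^1(J_{p,\rho}'')}\,\lesssim_\rho\,\bigl(1+C_B(A_{p,\rho})\bigr)\,C_K(J_{p,\rho}'')\,\|\D(g)\|_{L^2(J_{p,\rho}'')}.$$

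\medskip

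\noindent\textbf{Main obstacle and how to close.} The only nontrivial point is to bound the Korn constant $C_K(J_{p,\rho}'')$ and the Bogovskii constant $C_B(A_{p,\rho})$ by $(1+\diam J_{p,\rho})^C$. This is where the interior-ball properties recalled above are essential: a connected bounded open set with an interior ball condition of radius $\ge\rho/2$ and diameter $D$ is a John domain with John constant bounded below by $c\,\rho/D$. For such domains, both Bogovskii's inequality (cf.~\cite{ADM-06}) and Korn's inequality admit estimates whose constants are polynomial in the ratio $D/\rho$, which yields the desired $(1+\diam J_{p,\rho})^C$ dependence at fixed $\rho$ and concludes the proof. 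The conceptual difficulty is entirely geometric — clusters can be wild and tentacular — and it is precisely the careful definitions of $J_{p,\rho}'$ and $J_{p,\rho}''$ with their interior/exterior ball conditions that make the John-domain machinery applicable in a controlled way.
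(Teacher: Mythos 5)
Your proof is correct and follows essentially the same route as the paper's: a cutoff localizing to $J_{p,\rho}''$, a rigid-motion correction via Korn's inequality on the John domain $J_{p,\rho}''$, and a Bogovskii correction on the annular John domain $J_{p,\rho}''\setminus J_{p,\rho}'$ to restore the divergence-free constraint, with all constants controlled polynomially in $\diam J_{p,\rho}$ through the John constants. The only difference is cosmetic — you fix the rigid motion $r$ at the outset whereas the paper leaves $\Upsilon_g,\Theta_g$ unspecified until the final Korn step.
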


The proof follows from a standard construction based on the Bogovskii operator. Due to the possible geometric complexity of the clusters~$\{J_{p,\rho}\}_p$, we appeal to a suitably refined version of Bogovskii's construction due to Acosta, Dur\'{a}n, and Muschietti~\cite{ADM-06} in John domains, as stated below.
Recall that a set $V\subset\R^d$ is a \emph{John domain} if it has a basepoint~$x_V$ such that any other point can be connected to it without getting too close to the boundary: more precisely, $V$ is a John domain with John constant~$L_V$ with respect to a basepoint~$x_V$ if for all $y\in V$ there exists a Lipschitz map \mbox{$\zeta:[0,|y-x_V|]\to V$} with Lipschitz constant~$L_V$ such that $\zeta(0)=y$, $\zeta(|y-x_V|)=x_V$, and such that \mbox{$\dist(\zeta(t),\partial V)\ge\frac1{L_V}t$} for all $t\in[0,|y-x_V|]$.
Note that the dependence on~$L_V$ and $\diam V$ in the estimate below can be sharpened by inspecting the construction in~\cite{ADM-06}; this is however not pursued here and we only retain polynomial growth.

\begin{lem}[Bogovskii's construction in John domains; \cite{ADM-06}]\label{lem:Bogo}
Let $V\subset\R^d$ be a John domain with John constant~$L_V\ge1$, with respect to a basepoint $x_V$.
For all $1<q<\infty$ and $f\in\Ld^q(\Omega)$ with~$\int_V f=0$, there exists $u_f\in W^{1,q}_0(V)^d$ with
$\Div(u_f)=f$
and
\begin{align*}
\|\nabla u_f\|_{\Ld^{q}(V)}\,\lesssim_q\,\big(L_V+\diam V\big)^C\,\|f\|_{\Ld^q(V)},
\end{align*}
where the multiplicative constant may additionally depend on $\dist(x_V,\partial V)$.
\end{lem}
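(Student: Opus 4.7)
The plan is to combine a Whitney-type decomposition of $V$ with the John chain structure to reduce the problem to the classical Bogovskii construction on cubes. First I would partition $V$ into a Whitney decomposition $\{Q_j\}_{j\ge0}$ of dyadic cubes with $\diam Q_j\simeq\dist(Q_j,\partial V)$ and a bounded-overlap property for the slightly enlarged cubes $Q_j^{*}$; one of these cubes, call it $Q_0$, contains the basepoint $x_V$. The John condition then yields for every $j$ a chain $Q_j=Q_{j_0},Q_{j_1},\dots,Q_{j_{N(j)}}=Q_0$ of Whitney cubes whose length $N(j)$ and geometric complexity are controlled in terms of $L_V$ and $\diam V$ only, with consecutive cubes sharing a nontrivial portion of their faces.

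Given $f\in\Ld^q(V)$ with $\int_V f=0$, I would fix a partition of unity $\{\eta_j\}$ subordinate to $\{Q_j^{*}\}$ and form $f_j:=\eta_j f$. Since $\int_V f=0$, one can redistribute the masses $c_j:=\int_V f_j$ along the John chains: introducing intermediate mean-zero pieces $g_{j,k}$ supported in the union $Q_{j_k}^{*}\cup Q_{j_{k+1}}^{*}$ of two consecutive cubes, one writes
\[f=\sum_{j}\big(f_j-c_j\chi_{j}\big)+\sum_{j}\sum_{k=0}^{N(j)-1}g_{j,k},\]
where $\chi_j$ is a normalized bump supported in $Q_j$ and each summand has vanishing mean on a domain of bounded geometry (a single enlarged cube, or the union of two adjacent ones, which by the chain property is a bounded Lipschitz domain of controlled shape). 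On each such piece I would apply the classical Bogovskii operator on a cube (or on a union of two overlapping cubes), which produces a compactly supported vector field $v_{j,k}\in W^{1,q}_0$ with $\Div(v_{j,k})$ equal to the given piece and with $\|\nabla v_{j,k}\|_{\Ld^q}$ controlled by the $\Ld^q$ norm of the piece (with a constant depending only on $q$ and $d$, thanks to scaling invariance of the local Bogovskii estimate).

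Defining $u_f$ as the sum of all these local solutions produces an element of $W^{1,q}_0(V)^d$ with $\Div(u_f)=f$. The only nontrivial ingredient left is the global $\Ld^q$ bound: the local pieces $f_j-c_j\chi_j$ are trivially controlled by bounded overlap, whereas the "transport" contributions $g_{j,k}$ must be summed along the John chains. This is where the main obstacle lies. The estimate reduces to a discrete Hardy-type inequality on the tree of Whitney cubes: one must bound
\[\Big\|\sum_j \sum_{k} \nabla v_{j,k}\Big\|_{\Ld^q(V)} \,\lesssim\, \|f\|_{\Ld^q(V)},\]
which follows from the boundedness on $\ell^q$ of the maximal operator associated with the John chains, with constant polynomial in $L_V$ and $\diam V$. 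The exponent $C$ in the final bound comes from counting the maximal chain length and the number of cubes that can share a given ancestor.

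Finally, the assertion that the constant may additionally depend on $\dist(x_V,\partial V)$ simply reflects that the central cube $Q_0$ must be chosen with $\diam Q_0\gtrsim\dist(x_V,\partial V)$ in order for the normalizing bump $\chi_0$ and all the chain endpoints to live in a fixed cube where the local Bogovskii inequality has a uniform constant. Once this cube is fixed, the rest of the construction is scale-invariant and the estimates propagate through the chain as described. The whole argument is essentially the one carried out in~\cite{ADM-06}; the only role it plays in our paper is as a black box for the truncation lemma above, so no sharpening of the exponent $C$ is attempted.
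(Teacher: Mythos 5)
The paper does not prove this lemma; it quotes it directly from \cite{ADM-06} (the very brackets in the statement say so) and uses it as a black box in the truncation lemma and Korn's inequality. Your sketch is a fair account of the proof strategy in that reference: Whitney decomposition, John chains of cubes, a telescoping decomposition of $f$ into zero-mean pieces supported on pairs of adjacent cubes, local Bogovskii on star-shaped pieces, and a maximal-function/discrete-Hardy estimate to sum the chain contributions, yielding polynomial dependence on $L_V$ and $\diam V$. Since the paper gives no proof of its own, there is nothing to compare it against beyond noting that your outline is consistent with \cite{ADM-06}; a rigorous write-up would of course need to make precise the overlap properties of consecutive cubes (ensuring each pair is star-shaped with respect to a ball of comparable size) and the $\ell^q$-boundedness of the chain-maximal operator, which is where the bulk of the technical work in \cite{ADM-06} lies.
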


As a standard corollary of this Bogovskii construction, we deduce the following version of Korn's inequality in John domains; a short proof is included for convenience.

\begin{cor}[Korn's inequality in John domains]\label{cor:Korn}
Let \mbox{$V\subset\R^d$} be a John domain with John constant~$L_V\ge1$, with respect to a basepoint $x_V$.
For all $1<q<\infty$ and $u\in W^{1,q}(V)^d$ we have
\[\inf_{\Upsilon\in\R^d}\,\inf_{\Theta\in\Md^\Skew}\,\|u-\Upsilon-\Theta x\|_{W^{1,q}(V)}\,\lesssim_q\,\big(L_V+\diam V\big)^C\|\!\D(u)\|_{\Ld^q(V)},\]
where the multiplicative constant may additionally depend on $\dist(x_V,\partial V)$.
\end{cor}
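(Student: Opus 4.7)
The plan is to derive the Korn inequality as a standard consequence of Bogovskii's construction (Lemma~\ref{lem:Bogo}) via a duality argument, followed by Poincar\'e's inequality on John domains. I would proceed in three main steps.

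\textbf{Step 1: Nečas's inequality.} By duality from Lemma~\ref{lem:Bogo}, I would establish that for every $p\in\Ld^{q'}(V)$ (with $q'=q/(q-1)$),
\[\inf_{c\in\R}\|p-c\|_{\Ld^{q'}(V)}\,\lesssim_q\,(L_V+\diam V)^C\,\|\nabla p\|_{W^{-1,q'}(V)}.\]
The argument is standard: for any mean-zero test function $\varphi\in\Ld^q(V)$, Lemma~\ref{lem:Bogo} provides $u_\varphi\in W^{1,q}_0(V)^d$ with $\Div(u_\varphi)=\varphi$ and $\|\nabla u_\varphi\|_{\Ld^q(V)}\lesssim(L_V+\diam V)^C\|\varphi\|_{\Ld^q(V)}$. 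Then $\int_V(p-\bar p)\varphi=-\langle\nabla p,u_\varphi\rangle$ yields the bound by taking the supremum in $\varphi$.

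\textbf{Step 2: Control of the skew-symmetric part of $\nabla u$.} Writing the skew-symmetric gradient $W(u)_{ij}:=\tfrac12(\partial_iu_j-\partial_ju_i)$, one has the identity
\[\partial_kW(u)_{ij}\,=\,\partial_i\!\D(u)_{jk}-\partial_j\!\D(u)_{ik},\]
so that $\|\nabla W(u)_{ij}\|_{W^{-1,q}(V)}\lesssim\|\!\D(u)\|_{\Ld^q(V)}$. Applying the Nečas inequality of Step~1 componentwise (with exponent $q$ in place of $q'$, obtained by the same duality), I get
\[\|W(u)-\Theta\|_{\Ld^q(V)}\,\lesssim_q\,(L_V+\diam V)^C\,\|\!\D(u)\|_{\Ld^q(V)},\qquad\Theta:=\fint_VW(u)\in\Md^\Skew.\]
Since $\nabla u=\D(u)+W(u)$, this already gives the gradient-level estimate $\|\nabla(u-\Theta x)\|_{\Ld^q(V)}\lesssim(L_V+\diam V)^C\|\!\D(u)\|_{\Ld^q(V)}$.

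\textbf{Step 3: Poincar\'e on John domains.} To conclude, I appeal to the Poincar\'e inequality on John domains (Bojarski, Martio--Sarvas), which holds with a constant that depends polynomially on $L_V$, $\diam V$ and $\dist(x_V,\partial V)$. Setting $\Upsilon:=\fint_V(u-\Theta x)$, this yields
\[\|u-\Upsilon-\Theta x\|_{\Ld^q(V)}\,\lesssim_q\,(L_V+\diam V)^C\,\|\nabla(u-\Theta x)\|_{\Ld^q(V)}\,\lesssim_q\,(L_V+\diam V)^C\,\|\!\D(u)\|_{\Ld^q(V)},\]
and combining with Step~2 proves the claim.

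The main (mild) obstacle is really bookkeeping: both the duality argument in Step~1 and the John-domain Poincar\'e inequality introduce factors involving $L_V$, $\diam V$, and $\dist(x_V,\partial V)$, which must be tracked to verify polynomial dependence. The duality goes through cleanly because Lemma~\ref{lem:Bogo} is already stated with polynomial growth, and the John-domain Poincar\'e constant is known to exhibit the same polynomial behavior, so no additional input is required.
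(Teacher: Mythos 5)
Your proposal is correct and follows essentially the same route as the paper's proof: Nečas's inequality by duality from Bogovskii (Lemma~\ref{lem:Bogo}), the algebraic identity expressing second derivatives in terms of $\nabla\!\D(u)$, and Poincar\'e's inequality on John domains. The only cosmetic difference is that you apply Nečas directly to the skew part $W(u)=\nabla u-\D(u)$ via $\partial_kW(u)_{ij}=\partial_i\D(u)_{jk}-\partial_j\D(u)_{ik}$, whereas the paper applies it to $\nabla u-\fint_V\nabla u$ via the Duvaut--Lions identity $\nabla^2_{ij}u_k=\nabla_i\D(u)_{jk}+\nabla_j\D(u)_{ki}-\nabla_k\D(u)_{ij}$; since $\nabla u=\D(u)+W(u)$ and $\D(u)$ is already controlled, these are equivalent.
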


\begin{proof}
For $1<q<\infty$, given $g\in \Ld^q(V)$ with $\int_V g=0$, letting $u_f$ be as in Lemma~\ref{lem:Bogo}, we can write by duality,
\begin{eqnarray*}
\|g\|_{\Ld^q(V)}&=&\sup\Big\{\int_V fg~:~f\in\Ld^{q'}(V),~\int_V f=0,~\|f\|_{\Ld^{q'}(V)}=1\Big\}\\
&=&\sup\Big\{\int_V u_f\cdot\nabla g~:~f\in\Ld^{q'}\!(V),~\int_V f=0,~\|f\|_{\Ld^{q'}(V)}=1\Big\},
\end{eqnarray*}
hence, using the bound of Lemma~\ref{lem:Bogo} on $\nabla u_f$, and appealing to Poincaré's inequality on the John domain $V$, cf.~\cite{Reshetnyak-80,Martio-88},
\begin{equation}
\|g\|_{\Ld^q(V)}\,\lesssim_q\,\big(L_V+\diam V\big)^C\|\nabla g\|_{W^{-1,q}(V)},\label{eq:dual-bogo}
\end{equation}
and similarly,
\begin{equation}\label{eq:dual-bogo-bis}
\|g\|_{\Ld^q(V)}
\,\lesssim_q\,\big(L_V+\diam V\big)^C\|\nabla g\|_{\Ld^q(V)}.
\end{equation}
Given $u\in W^{1,q}(V)^d$, choosing $x_0:=\fint_V x\,dx$, $\Upsilon_u:=\fint_V u$, and $\Theta_u:=\fint_V(\nabla u-\D(u))$, the inequality~\eqref{eq:dual-bogo-bis} yields
\[\|u-\Upsilon_u-\Theta_u(x-x_0)\|_{\Ld^q(V)}\,\lesssim_q\,\big(L_V+\diam V\big)^C\|\nabla u-\Theta_u\|_{\Ld^q(V)},\]
where the last factor can be split into
\begin{eqnarray*}
\|\nabla u-\Theta_u\|_{\Ld^q(V)}&\le&\|\!\D(u)\|_{\Ld^q(V)}+\Big\|(\nabla u-\D(u))-\fint_V(\nabla u-\D(u))\Big\|_{\Ld^q(V)}\\
&\le&\|\!\D(u)\|_{\Ld^q(V)}+\Big\|\nabla u-\fint_V\nabla u\Big\|_{\Ld^q(V)}.
\end{eqnarray*}
It remains to estimate the last term.
For that purpose, we appeal to the inequality~\eqref{eq:dual-bogo} in form of
\[\Big\|\nabla u-\fint_V\nabla u\Big\|_{\Ld^q(V)}\,\lesssim_q\,\big(L_V+\diam V\big)^C\|\nabla^2u\|_{W^{-1,q}(\Omega)}.\]
Recalling the standard observation
\[\nabla_{ij}^2u_k\,=\,\nabla_i\D(u)_{jk}+\nabla_j\D(u)_{ki}-\nabla_k\D(u)_{ij},\]
see e.g.~\cite[(3.16)]{Duvaut-Lions}, we get
\[\|\nabla^2u\|_{W^{-1,q}(\Omega)}\,\lesssim_q\,\|\!\D(u)\|_{\Ld^q(\Omega)},\]
and the conclusion follows.
\end{proof}

With these estimates at hand, we may now turn to the proof of Lemma~\ref{lem:trunc} for truncation around clusters.

\begin{proof}[Proof of Lemma~\ref{lem:trunc}]
Let a vector field $g\in C^1_b(J''_{p,\rho})^d$ be fixed with $\Div(g)=0$ in $J''_{p,\rho}$, and let $\Upsilon_g\in\R^d$ and $\Theta_g\in\Md^\Skew$ to be chosen later.
Let $\chi_{p,\rho}\in C^\infty_c(\R^d)$  be a cut-off function with
\[\chi_{p,\rho}|_{J_{p,\rho}'}=1,\qquad\chi_{p,\rho}|_{\R^d\setminus J_{p,\rho}''}=0,\qquad|\nabla\chi_{p,\rho}|\lesssim\rho^{-1}.\]
From the properties of $J_{p,\rho}'$ and $J_{p,\rho}''$, in particular from the fact that $J_{p,\rho}''\setminus J_{p,\rho}'$ is connected and satisfies an interior ball condition with radius $\frac12\rho$, we infer that $J_{p,\rho}''\setminus J_{p,\rho}'$ is a John domain with constant $\lesssim_{\rho}1+|J_{p,\rho}|\lesssim(1+\diam J_{p,\rho})^d$, and its basepoint can be chosen at distance~\mbox{$\gtrsim_\rho1$} from its boundary.
In view of the compatibility relation
\begin{equation*}
\int_{J_{p,\rho}''\setminus J_{p,\rho}'}\Div(\chi_{p,\rho} (g-\Upsilon_g-\Theta_gx))\,=\,-\int_{\partial J_{p,\rho}'}(g-\Upsilon_g-\Theta_gx)\cdot\nu
\,=\,-\int_{J_{p,\rho}'}\Div(g)\,=\,0,
\end{equation*}
where $\nu$ is the outward unit normal vector on $\partial J_{p,\rho}'$,
we may  appeal to Bogovskii's construction in form of Lemma~\ref{lem:Bogo}, which provides us some \mbox{$v_{p,\rho}\in H^1_0(J_{p,\rho}''\setminus J_{p,\rho}')^d$} such that
\[\Div(v_{p,\rho})=\Div(\chi_{p,\rho} (g-\Upsilon_g-\Theta_gx))\qquad\text{in $J_{p,\rho}''\setminus J_{p,\rho}'$},\]
and
\begin{eqnarray*}
\|\nabla v_{p,\rho}\|_{\Ld^2(J_{p,\rho}''\setminus J_{p,\rho}')}\,\lesssim_{\rho}\,(1+\diam J_{p,\rho})^C\|\Div(\chi_{p,\rho} (g-\Upsilon_g-\Theta_gx))\|_{\Ld^2(J_{p,\rho}''\setminus J_{p,\rho}')}.
\end{eqnarray*}
Next, we define
\[T_{p,\rho} [g]\,:=\,\chi_{p,\rho} (g-\Upsilon_g-\Theta_gx)-v_{p,\rho}~~\in H^1_0(J_{p,\rho}'')^d,\]
which satisfies
\[\Div(T_{p,\rho}[g])=0,\qquad\D(T_{p,\rho}[g])|_{J_{p,\rho}'}=\D(g)|_{J_{p,\rho}'},\]
and
\begin{eqnarray*}
\|\nabla T_{p,\rho}[g]\|_{\Ld^2(J_{p,\rho}'')}\,\lesssim_{\rho}\,(1+\diam J_{p,\rho})^C\|g-\Upsilon_g-\Theta_gx\|_{H^1(J_{p,\rho}'')}.
\end{eqnarray*}
Since $J_{p,\rho}''$ is also a John domain with constant $\lesssim_{\rho}(1+\diam J_{p,\rho})^d$ for some basepoint at distance $\gtrsim_\rho1$ from the boundary, we may now appeal to Korn's inequality in form of Corollary~\ref{cor:Korn}: for a suitable choice of constants $\Upsilon_g\in\R^d$ and $\Theta_g\in\Md^\Skew$, we find
\[\|g-\Upsilon_g-\Theta_gx\|_{H^1(J_{p,\rho}'')}\,\lesssim_{\rho}\,(1+\diam J_{p,\rho})^C\|\!\D(g)\|_{\Ld^2(J_{p,\rho}'')},\]
and the conclusion follows.
\end{proof}

\subsection{Construction of correctors: proof of Proposition~\ref{prop:cor}}\label{sec:cor}
Let $E\in\Md_0^\Sym$ be fixed.
The proof is split into two steps; as in~\cite{D-20}, it is deduced from~\cite{DG-21a} together with an approximation argument.

\medskip
\step1 Approximation with uniformly separated particles.\\
For $s>0$, we consider the chopped inclusions
\[I_n^s\,:=\,\big\{x\in I_n:\dist(x,\partial I_n)>2s\big\}+s B,\qquad\Ic^s:=\bigcup_nI_n^s,\]
which still satisfy the conditions~\ref{Hd} and~\ref{Hh},
as well as interior and exterior ball conditions with radius $s$,
and which have minimal interparticle distance $\inf_{n\ne m}\dist(I_n^s,I_m^s)\ge2s$. In this uniform context, we can apply~\cite[Proposition~2.1]{DG-21a}: there exists a unique corrector~$\psi_E^s$ that satisfies the different properties stated in Proposition~\ref{prop:cor} with $\Ic$ replaced by $\Ic^s$. In addition, we shall show that the following moment bound holds uniformly with respect to the parameter~\mbox{$s>0$},
\begin{eqnarray}
\expecm{|\nabla\psi_E^s|^2}\,\lesssim\,|E|^2.\label{eq:bnd-kap-psi}
\end{eqnarray}

\medskip\noindent
We turn to the proof of this estimate.
For that purpose, in terms of the truncation operators~$\{T_{p,\rho}\}_p$ constructed in Lemma~\ref{lem:trunc}, we first consider the following stationary random vector field,
\[\phi_E^\circ\,:=\,-\sum_pT_{p,\rho}[E(x-x_{p,\rho})],\qquad x_{p,\rho}:=\fint_{J_{p,\rho}''}x\,dx,\]
and we show that it satisfies
\begin{equation}\label{eq:prop-phi0}
(\D(\phi_E^\circ)+E)|_{\Ic}=0,\quad\Div(\phi_E^\circ)=0,\quad\expec{\D(\phi_E^\circ)}=0,\quad\expec{|\!\D(\phi_E^\circ)|^2}\lesssim|E|^2.
\end{equation}
The first two properties follow from the construction of $T_{p,\rho}$ with
\[\D(T_{p,\rho}[E(x-x_{p,\rho})])|_{J_{p,\rho}'}=E\qquad\text{and}\qquad \Div(T_{p,\rho}[E(x-x_{p,\rho})])=0.\]
We turn to the proof of the third property in~\eqref{eq:prop-phi0}.
Given $\chi\in C^\infty_c(B)$ with $\int_{\R^d}\chi=1$, we have by stationarity, for all $R\ge1$,
\[\expec{\nabla\phi_E^\circ}\,=\,\expecM{R^{-d}\int_{B_R}\chi(\tfrac\cdot R)\,\nabla\phi_E^\circ}\,=\,-\expecM{R^{-d-1}\int_{B_R}\phi_E^\circ\otimes\nabla\chi(\tfrac\cdot R)},\]
where the second identity follows by integration by parts.
Inserting the definition of $\phi_E^\circ$ and appealing to Lemma~\ref{lem:trunc}, we deduce
\begin{eqnarray}
|\expec{\nabla\phi_E^\circ}\!|&\lesssim&\expecM{R^{-d-1}\sum_{p:J''_{p,\rho}\cap B_R\ne\varnothing}\int_{J_{p,\rho}''}|\phi_E^\circ|}\nonumber\\
&\le&\expecM{R^{-d-1}\sum_{p:J_{p,\rho}''\cap B_R\ne\varnothing}|J''_{p,\rho}|^\frac12\|T_{p,\rho}(E(x-x_{p,\rho}))\|_{\Ld^2(J''_{p,\rho})}}\nonumber\\
&\lesssim_\rho&|E|\,\expecM{R^{-d-1}\sum_{p:J_{p,\rho}''\cap B_R\ne\varnothing}|J''_{p,\rho}|(\diam J''_{p,\rho})^C}.\label{eq:laslim-E0}
\end{eqnarray}
The condition~\ref{Hd'} with $\kappa$ large enough yields $\E[(\diam J''_{0,\rho})^C]<\infty$, and we note that the ergodic theorem gives almost surely
\begin{equation}\label{eq:bnd-sumdiam}
\limsup_{R\uparrow\infty}|B_R|^{-1}\sum_{p:J_{p,\rho}''\cap B_R\ne\varnothing}|J_{p,\rho}''|(\diam J''_{p,\rho})^C\,\le\,\expec{(\diam J''_{0,\rho})^C}.
\end{equation}
Letting $R\uparrow\infty$ in~\eqref{eq:laslim-E0} and using the above, the claim $\expec{\nabla\phi_E^\circ}=0$ follows.
We turn to the proof of the last property in~\eqref{eq:prop-phi0}:
noting that Lemma~\ref{lem:trunc} yields
\[\|\!\D(T_{p,\rho}(E(x-x_{p,\rho})))\|_{\Ld^2(J_{p,\rho}'')}\,\lesssim_\rho\,|E|(\diam J''_{p,\rho})^C,\]
we find by stationarity, provided that~\ref{Hd'} holds for $\kappa$ large enough,
\[\expecm{|\!\D(\phi_E^\circ)|^2}\,\lesssim_\rho\,|E|^2\,\expecm{(\diam J''_{0,\rho})^C}\,<\,\infty,\]
as claimed.

\medskip\noindent
Now that the different properties~\eqref{eq:prop-phi0} of the stationary random vector field $\phi_E^\circ$ have been established, we use them to prove~\eqref{eq:bnd-kap-psi}. Since $\Ic^s\subset\Ic$, we can use $\phi_E^\circ$ as a test function in the variational problem~\eqref{eq:cor-variat} defining~$\psi_E^s$ (with $\Ic$ replaced by $\Ic^s$), which leads us to
\[\expecm{|\!\D(\psi_E^s)+E|^2}\,\le\,\expecm{|\!\D(\phi_E^\circ)+E|^2}\,\lesssim\,|E|^2.\]
Integrating by parts and using the incompressibility and the sublinearity of $\psi_E^s$ at infinity, cf.~\eqref{eq:conv-cor}, we easily find
\begin{equation*}
\expec{|\nabla\psi_E^s|^2}\,=\,2\,\expec{|\!\D(\psi_E^s)|^2},
\end{equation*}
and the conclusion~\eqref{eq:bnd-kap-psi} follows.

\medskip
\step2 Conclusion.\\
In view of the uniform bound~\eqref{eq:bnd-kap-psi}, we may consider a weak limit point $\nabla\psi_E$ of $\{\nabla\psi_E^s\}_{s>0}$ in $\Ld^2(\Omega;\Ld^2_\loc(\R^d)^{d\times d})$ as $s\downarrow0$. It follows that $\nabla\psi_E$ is stationary with vanishing expectation and finite second moments, that it satisfies $\Div(\psi_E)=0$ and $(\D(\psi_E)+E)|_{\Ic}=0$, and that $\D(\psi_E)$ is the unique solution of the limiting variational problem~\eqref{eq:cor-variat}.
Finally, the sublinearity property~\eqref{eq:conv-cor} is a standard consequence of the ergodic theorem for a random field with centered stationary gradient having bounded second moments.
\qed

\subsection{Homogenization result: proof of Theorem~\ref{th:homog}}\label{sec:Gamma}
For all $\e>0$, we consider the unique minimizer $u_\e\in H^1_0(U)^d$ of the Stokes problem~\eqref{eq:mini-Stokes}, that is,
\[\inf\bigg\{F_\e(v)-\int_{U\setminus\Ic_\e(U)}f\cdot v~:~v\in H^1_0(U)^d\bigg\},\]
in terms of the energy functional
\[F_\e(v)\,:=\,\left\{\begin{array}{lll}
\int_{U}|\!\D(v)|^2&:&\text{if $\Div(v)=0$ and $\D(v)|_{\Ic_\e(U)}=0$},\\
\infty&:&\text{otherwise},
\end{array}\right.\]
and we also consider the unique minimizer $\bar u$ of the variational problem
\[\inf\bigg\{\bar F(v)-(1-\lambda)\int_Uf\cdot v~:~v\in H^1_0(U)^d\bigg\},\]
in terms of the homogenized energy functional
\[\bar F(v)\,:=\,\left\{\begin{array}{lll}
\int_{U}\D(v):\Bb\D(v)&:&\text{if $\Div(v)=0$},\\
\infty&:&\text{otherwise}.
\end{array}\right.\]
By standard properties of $\Gamma$-convergence, e.g.~\cite{DalMaso-93,Braides-06}, as $\mathds1_{U\setminus\Ic_\e(U)}\cvf1-\lambda$ \mbox{weakly-*} in~$\Ld^\infty(U)$, Theorem~\ref{th:homog} is a consequence of the almost sure $\Gamma$-convergence of energy functionals $F_\e$ to~$\bar F$ on~$H^1_0(U)^d$ as~$\e\downarrow0$. In other words, it suffices to establish the following two properties,
\begin{enumerate}[(A)]
\item \emph{$\Gamma$-liminf inequality}: For any $v\in H^1_0(U)^d$ and any sequence $(v_\e)_\e$ with $v_\e\cvf v$ in~$H^1_0(U)^d$, we have
\[\liminf_{\e\downarrow0}F_\e(v_\e)\,\ge\, \bar F(v).\]
\item \emph{$\Gamma$-limsup inequality}: For any $v\in H^1_0(U)^d$ there is a sequence $(v_\e)_\e$ with $v_\e\cvf v$ in~$H^1_0(U)^d$ such that
\[\limsup_{\e\downarrow0}F_\e(v_\e)\,\le\, \bar F(v).\]
\end{enumerate}
We split the proof into two steps.  While the $\Gamma$-liminf inequality easily follows from an approximation argument, the $\Gamma$-limsup inequality requires a particular care as naïve recovery sequences in form of $2$-scale expansions do not satisfy the incompressibility constraint. We emphasize that this variational approach is of fundamental utility in the present setting. Indeed, in contrast with other approaches to homogenization~\cite{DG-21a,D-20}, the construction of recovery sequences does not require any detailed description of energy minimizers in the direct proximity of contact points: correctors can be used there directly without precise modulation (more precisely, recovery sequences like~\eqref{eq:weps} below are simply chosen as {\it affine} transformations of correctors in each cluster neighborhood $J'_{p,\rho}$).

\medskip
\step1 Proof of $\Gamma$-liminf inequality~(A).\\
As in the proof of Proposition~\ref{prop:cor} above,
for $s>0$, we consider the choped inclusions
\[I_n^s\,:=\,\big\{x\in I_n:\dist(x,\partial I_n)>2s\big\}+s B,\qquad\Ic^s:=\bigcup_nI_n^s,\]
which still satisfy the conditions~\ref{Hd} and~\ref{Hh}, as well as interior and exterior ball conditions with radius $s$, and which have minimal interparticle distance $\inf_{n\ne m}\dist(I_n^s,I_m^s)\ge2s$.
We then define $\Ic^s_\e(U):=\bigcup_{n\in\Nc_\e(U)}\e I_n^s$ and
we consider the corresponding energies
\[F_\e^s(v)\,:=\,\left\{\begin{array}{lll}
\int_{U}|\!\D(v)|^2&:&\text{if $\Div(u)=0$ and $\D(v)|_{\Ic_\e^s(U)}=0$},\\
\infty&:&\text{otherwise}.
\end{array}\right.\]
For fixed $s>0$, we can apply the homogenization result of~\cite{DG-21a},
which, reformulated in variational terms, ensures that, for any sequence $(v_\e)_\e$ with $v_\e\cvf v$ in $H^1_0(U)^d$, we have
\[\liminf_{\e\downarrow0}F_\e^s(v_\e)\,\ge\,\bar F^s(v)\,:=\,\left\{\begin{array}{lll}
\int_{U}\D(v):\Bb^s\D(v)&:&\text{if $\Div(v)=0$},\\
\infty&:&\text{otherwise},
\end{array}\right.\]
where the effective viscosity $\Bb^s$ is given for all $E\in\Md_0^\Sym$ by
\[E:\Bb^s E\,:=\,\expec{|\!\D(\psi_E^s)+E|^2},\]
in terms of the corrector $\psi_E^s$ for the problem with chopped inclusions $\Ic^s$.
As by definition $\Ic_\e^s(U)\subset\Ic_\e(U)$, we find $F_\e^s\le F_\e$, so that the above entails
\[\liminf_{\e\downarrow0}F_\e(v_\e)\,\ge\,\liminf_{\e\downarrow0}F_\e^s(v_\e)\,\ge\,\bar F^s(v).\]
Finally, by the proof of Proposition~\ref{prop:cor}, we have $\D(\psi_E^s)\cvf\D(\psi_E)$ in $\Ld^2(\Omega;\Ld^2_\loc(\R^d)^{d\times d})$, and thus
\[\liminf_{s\downarrow0}E:\Bb^s E\,\ge\, E:\Bb E,\]
which entails $\liminf_{s\downarrow0}\bar F^s(v)\ge \bar F(v)$, and the $\Gamma$-liminf inequality~(A) follows.

\medskip
\step2 Proof of $\Gamma$-limsup inequality~(B).\\
This is the crucial step.
Let $v\in H^1_0(U)^d$ be fixed with $\Div(v)=0$. By a density argument, we may assume that $v$ belongs to $C^\infty_c(U)^d$ and is supported in
\[U^{3\theta}:=\{x\in U:\dist(x,\partial U)>3\theta\}\]
for some $\theta>0$.
A natural choice for the approximating sequence $v_\e$ would be the two-scale expansion of $v$,
\[v+\e\psi_E(\tfrac\cdot\e)\partial_E v,\]
in terms of the corrector $\psi_E$ constructed in Proposition~\ref{prop:cor},
where we implicitly sum over $E$ in some orthonormal basis of $\Md_0^\Sym$.
However, this two-scale expansion is not divergence-free and does not satisfy the required rigidity constraint on $\Ic_\e(U)$:
careful local surgery is needed to modify this naïve two-scale expansion.
We split the proof  into four further steps, where we gradually modify the naïve two-scale expansion
to make it satisfy the incompressibility and rigidity constraints without increasing its energy too much.

\medskip
\substep{2.1} Rigidity constraint: truncated two-scale expansion.\\
We construct an approximating sequence $(w_\e)_\e\subset \Ld^2(\Omega;H^1_0(U)^d)$ with the following properties:
\begin{enumerate}[(a)]
\item almost surely there exists $\e_0>0$ such that $w_\e$ is supported in $U^{2\theta}$ for all $0<\e<\e_0$;
\smallskip\item $\D(w_\e)|_{\e I_n}=0$ for all $n$;
\smallskip\item $\Div(w_\e)$ is supported in $U\setminus\bigcup_p \e J_{p,\rho}'$ and for all $q<\frac{2d}{d-2}$ it satisfies $\Div(w_\e)\to0$ almost surely in $\Ld^q(U)$ provided that~\ref{Hd'} holds for $\kappa$ large enough;
\smallskip\item $w_\e-(v+\e\psi_E(\tfrac\cdot\e)\partial_Ev)\to0$ almost surely in $H^1_0(U)^d$.
\end{enumerate}\smallskip
Recalling the definition of fattened clusters $J_{p,\rho}\subset J_{p,\rho}'\subset J_{p,\rho}''$, cf.~\eqref{eq:defJ'J''}, we choose a family~$(\chi_{p,\rho})_p\subset C^\infty_c(\R^d)$ of cut-off functions with
\[\chi_{p,\rho}|_{J_{p,\rho}'}=1,\qquad\chi_{p,\rho}|_{\R^d\setminus J_{p,\rho}''}=0,\qquad|\nabla\chi_{p,\rho}|\lesssim\rho^{-1},\]
we consider the truncation operators given for all $g\in C^\infty_c(U)$ by
\begin{eqnarray}
T_0^\e[g]&:=&\Big(1-\sum_p\chi_{p,\rho}(\tfrac\cdot\e)\Big)g+\sum_p\chi_{p,\rho}(\tfrac\cdot\e)\Big(\fint_{\e J_{p,\rho}''}g\Big),\label{eq:def-T01}\\
T_1^\e[g]&:=&\Big(1-\sum_p\chi_{p,\rho}(\tfrac\cdot\e)\Big)g+\sum_p\chi_{p,\rho}(\tfrac\cdot\e)\bigg(\Big(\fint_{\e J''_{p,\rho}}g\Big)+\Big(\fint_{\e J''_{p,\rho}}\nabla_jg\Big)(x-\e x_{p,\rho})_j\bigg),\nonumber
\end{eqnarray}
with $x_{p,\rho}:=\fint_{J_{p,\rho}''}x\,dx$, and we define the following modification of the naïve two-scale expansion,
\begin{equation}\label{eq:weps}
w_\e\,:=\,T_1^\e[v]+\e\psi_E(\tfrac\cdot\e)T_0^\e[\partial_Ev].
\end{equation}
It remains to check the properties~(a)--(d) and we start with~(a). Writing~$\e\lesssim_\rho|\e J_{p,\rho}''|^\frac{1}d$, we can estimate for all $\eta<1$,
\begin{eqnarray*}
\sup_{p:U\cap\e J_{p,\rho}''\ne\varnothing}\e\diam J_{p,\rho}''
&\lesssim&\e^\eta\sup_{p:U\cap\e J_{p,\rho}''\ne\varnothing}|\e J_{p,\rho}''|^\frac{1-\eta}d\diam J_{p,\rho}''\\
&\lesssim&\e^\eta\bigg(\sum_{p:U\cap\e J_{p,\rho}''\ne\varnothing}|\e J_{p,\rho}''|(\diam J_{p,\rho}'')^\frac d{1-\eta}\bigg)^\frac{1-\eta}d,
\end{eqnarray*}
hence, in view of~\eqref{eq:bnd-sumdiam}, provided that~\ref{Hd'} holds for some $\kappa>d$,
\begin{equation}\label{eq:supremum-est0}
\lim_{\e\downarrow0}\sup_{p:U\cap\e J_{p,\rho}''\ne\varnothing}\e\diam J_{p,\rho}''
\,=\,0.
\end{equation}
Therefore, as $u$ is supported in~$U^{3\theta}$, there exists almost surely some $\e_0>0$ such that for~\mbox{$0<\e<\e_0$} any fattened cluster $\e J''_{p,\rho}$ intersecting $U^{3\theta}$ satisfies $\e J''_{p,\rho}\subset U^{2\theta}$. By definition of $w_\e$ and of the truncation operators $T_0^\e,T_1^\e$, this proves~(a).

\medskip\noindent
We turn to~(b), that is, the rigidity of $w_\e$ in the inclusions. For all $n$, choosing~$p$ with $I_n\subset J_{p,\rho}$, the property $(\D(\psi_E)+E)|_{I_n}=0$ of the corrector leads to
\begin{eqnarray*}
\D(w_\e)|_{\e I_n}&=&\Big(\fint_{\e J''_{p,\rho}}\D(v)\Big)+\D(\psi_E)(\tfrac\cdot\e)|_{\e I_n}\Big(\fint_{\e J_{p,\rho}''}\partial_Ev\Big)\\
&=&\Big(\fint_{\e J''_{p,\rho}}\D(v)\Big)-E\Big(\fint_{\e J''_{p,\rho}}\partial_E v\Big)
\,=\,0.
\end{eqnarray*}
We continue with the proof of~(c), that is, the approximate incompressibility. By definition of $w_\e$ and of the truncation operators $T_0^\e,T_1^\e$, using that~$v$ and~$\psi_E$ are divergence-free, a direct computation yields
\begin{multline*}
\Div(w_\e)\,=\,\e\psi_E(\tfrac\cdot\e)\cdot\Big(1-\sum_p\chi_{p,\rho}(\tfrac\cdot\e)\Big)\nabla\partial_Ev
-\e\psi_E(\tfrac\cdot\e)\cdot\sum_p\nabla(\chi_{p,\rho}(\tfrac\cdot\e))\Big(\partial_Ev-\fint_{\e J_{p,\rho}''}\partial_Ev\Big)\\
-\sum_p\nabla(\chi_{p,\rho}(\tfrac\cdot\e))\cdot\bigg(v-\Big(\fint_{\e J''_{p,\rho}}v\Big)-\Big(\fint_{\e J''_{p,\rho}}\nabla_jv\Big)(x-\e x_{p,\rho})_j\bigg).
\end{multline*}
Properties of the cut-off functions $\{\chi_{p,\rho}\}_p$ ensure that $\Div(w_\e)$ is supported in $U\setminus \bigcup_p\e J'_{p,\rho}$. In addition, we find for all $q\ge2$,
\begin{multline*}
\|\Div(w_\e)\|_{\Ld^q(U)}^q\,\lesssim_{\rho,q}\,\|\nabla^2v\|_{\Ld^\infty(U)}^q\\
\times\bigg(\|\e\psi_E(\tfrac\cdot\e)\|_{\Ld^q(U)}^q
+\sum_{p:U\cap\e J_{p,\rho}''\ne\varnothing}(\diam J_{p,\rho}'')^q\int_{\e J_{p,\rho}''}|\e\psi_E(\tfrac\cdot\e)|^q\\
+\e^q\sum_{p:U\cap \e J_{p,\rho}''\ne\varnothing}|\e J''_{p,\rho}|(\diam J_{p,\rho}'')^{2q}\bigg),
\end{multline*}
hence, by Hölder's and Jensen's inequalities, for all $2\le q<s<\frac{2d}{d-2}$,
\begin{multline*}
\|\Div(w_\e)\|_{\Ld^q(U)}^q\,\lesssim_{\rho,q}\,\|\nabla^2v\|_{\Ld^\infty(U)}^q\big(\e^q+\|\e\psi_E(\tfrac\cdot\e)\|_{\Ld^s(U)}^q\big)\\
\times\bigg(1+\Big(\sum_{p:U\cap\e J_{p,\rho}''\ne\varnothing}|\e J_{p,\rho}''|(\diam J''_{p,\rho})^\frac{sq}{s-q}\Big)^\frac{s-q}s+\sum_{p:U\cap\e J_{p,\rho}''\ne\varnothing}|\e J''_{p,\rho}|(\diam J''_{p,\rho})^{2q}\bigg).
\end{multline*}
In view of~\eqref{eq:bnd-sumdiam}, if~\ref{Hd'} holds for $\kappa$ large enough, then the last right-hand side factor is uniformly bounded as $\e\downarrow0$.
Combining this with the sublinearity of $\psi_E$ at infinity, cf.~\eqref{eq:conv-cor}, we conclude $\Div(w_\e)\to0$ almost surely in $\Ld^q(U)$, that is, item~(c).

\medskip\noindent
It remains to prove~(d).
By definition of $w_\e$ and of the truncation operators~$T_0^\e$ and~$T_1^\e$, direct estimates yield as above for all $2\le q<\frac{2d}{d-2}$,
\begin{multline*}
\|\nabla w_\e-\nabla(v+\e\psi_E(\tfrac\cdot\e)\partial_Ev)\|_{\Ld^2(U)}^2
\,\lesssim_{\rho}\,\|\nabla^2v\|_{\Ld^\infty(U)}^2\big(\e^2+\|\e\psi_E(\tfrac\cdot\e)\|_{\Ld^q(U)}^2\big)\\
\times\bigg(1+\Big(\sum_{p:U\cap\e J_{p,\rho}''\ne\varnothing}|\e J_{p,\rho}''|(\diam J''_{p,\rho})^\frac{2q}{q-2}\Big)^\frac{q-2}q+\sum_{p:U\cap\e J_{p,\rho}''\ne\varnothing}|\e J_{p,\rho}''|(\diam J''_{p,\rho})^4\bigg)\\
+\|\nabla^2v\|_{\Ld^\infty(U)}^2\|\nabla\psi_E(\tfrac\cdot\e)\|_{\Ld^2(U)}^2\sup_{p:U\cap\e J_{p,\rho}''\ne\varnothing}(\e\diam J''_{p,\rho})^2.
\end{multline*}
In view of~\eqref{eq:conv-cor} and~\eqref{eq:bnd-sumdiam}, if~\ref{Hd'} holds for $\kappa$ large enough, the first right-hand side term tends to $0$ almost surely.
Since the ergodic theorem yields almost surely
\[\|\nabla\psi_E(\tfrac\cdot\e)\|_{\Ld^2(U)}^2\to|U|\expec{|\nabla\psi_E|^2},\]
the second right-hand side term also tends to $0$ almost surely in view of~\eqref{eq:supremum-est0}.
Combined with Poincaré's inequality, this proves item~(d).

\medskip

\substep{2.2}  Incompressiblity constraint: global modification.
\\
We construct an approximating sequence $(w'_\e)_\e\subset \Ld^2(\Omega;H^1_0(U)^d)$ with the following properties:
\begin{enumerate}[(a$'$)]
\item almost surely there exists $\e_0>0$ such that $w'_\e$ is supported in $U^\theta$ for all $0<\e<\e_0$;
\smallskip\item $\D(w'_\e)|_{\e I_n}=0$ for all $n$;
\smallskip\item $\Div(w'_\e)$ is supported in $U\cap\bigcup_p\e (J''_{p,\rho}\setminus J'_{p,\rho})$ with $\int_{\e(J_{p,\rho}''\setminus J_{p,\rho}')}\Div(w'_\e)=0$ for all $p$, and for all $q<\frac{2d}{d-2}$ it satisfies $\Div(w'_\e)\to0$ almost surely in~$\Ld^q(U)$ provided that~\ref{Hd'} holds for $\kappa$ large enough;
\smallskip\item $w'_\e-(v+\e\psi_E(\tfrac\cdot\e)\partial_Ev)\to0$ almost surely in $H^1_0(U)^d$.
\end{enumerate}\smallskip
As in~\cite[Theorem~III.3.1]{Galdi} (see in particular Lemma~\ref{lem:Bogo}),
 Bogovskii's construction yields a vector field $z_\e\in H^1_0(U^{2\theta})^d$ such that $\Div(z_\e)\,=\,\Div(w_\e)$ and for all~$1< q<\infty$,
\begin{equation}\label{eq:bnd-nabz}
\|\nabla z_\e\|_{\Ld^q(U)}\,\lesssim_q\,\|\Div(w_\e)\|_{\Ld^q(U)}.
\end{equation}
We then define
\[w'_\e:=w_\e-T_0^\e[z_\e],\]
and it remains to check the properties~(a$'$)--(d$'$).
Item~(a$'$) follows similarly as~(a) in the previous step.
Item~(b$'$) follows from~(b) and the definition of~$T_0^\e$.

\medskip\noindent
We turn to the proof of~(c).
As $\Div(z_\e)=\Div(w_\e)$, a direct computation yields
\[\Div(w'_\e)\,=\,\sum_p\chi_{p,\rho}(\tfrac\cdot\e)\,\Div(w_\e)+\sum_p\nabla(\chi_{p,\rho}(\tfrac\cdot\e))\cdot\Big(z_\e-\fint_{\e J''_{p,\rho}}z_\e\Big).\]
On the one hand, by (c) and the properties of the cut-off functions $\{\chi_{p,\rho}\}_p$, this implies that
$\Div(w'_\e)$ is supported in $U\cap\bigcup_p\e (J''_{p,\rho}\setminus J'_{p,\rho})$
and satisfies for all $p$,
\begin{multline*}
\int_{\e(J_{p,\rho}''\setminus J_{p,\rho}')}\Div(w'_\e)
~=~\int_{\e J_{p,\rho}''}\chi_{p,\rho}(\tfrac\cdot\e)\,\Div(w_\e)+\int_{\e J_{p,\rho}''}\nabla(\chi_{p,\rho}(\tfrac\cdot\e))\cdot\Big(z_\e-\fint_{\e J_{p,\rho}''}z_\e\Big)\\
\,=~\int_{\e J_{p,\rho}''}\chi_{p,\rho}(\tfrac\cdot\e)\,\Div(w_\e)-\int_{\e J_{p,\rho}''}\chi_{p,\rho}(\tfrac\cdot\e)\,\Div(z_\e)~=~0.
\end{multline*}
On the other hand, for all $2\le q\le s<\frac{2d}{d-2}$, by Hölder's inequality, we can estimate
\begin{multline*}
\|\Div(w'_\e)\|_{\Ld^q(U)}^q\,\lesssim_{\rho,q}\,\|\Div(w_\e)\|_{\Ld^q(U)}^q+\sum_p(\diam J''_{p,\rho})^q\|\nabla z_\e\|_{\Ld^q(\e J_{p,\rho}'')}^q\\
\,\lesssim\,\|\Div(w_\e)\|_{\Ld^q(U)}^q+\|\nabla z_\e\|_{\Ld^s(U)}^q\bigg(\sum_{p:U\cap\e J_{p,\rho}''\ne\varnothing}|\e J_{p,\rho}''|(\diam J''_{p,\rho})^\frac{sq}{s-q}\bigg)^\frac{s-q}s.
\end{multline*}
In view of~\eqref{eq:bnd-sumdiam}, if~\ref{Hd'} holds for $\kappa$ large enough, the last right-hand side factor remains uniformly bounded almost surely as $\e\downarrow0$.
Combined with~\eqref{eq:bnd-nabz} and~(c), this proves~(c$'$).

\medskip\noindent
It remains to establish~(d$'$). By definition of~$T_0^\e$, by Hölder's and Jensen's inequalities, we have for all~$2\le q<\frac{2d}{d-2}$,
\begin{eqnarray*}
\|\nabla T_0^\e[z_\e]\|_{\Ld^2(U)}^2&\lesssim_\rho&\|\nabla z_\e\|_{\Ld^2(U)}^2+\sum_{p} (\diam J''_{p,\rho})^2\|\nabla z_\e\|_{\Ld^2(\e J''_{p,\rho})}^2\\
&\lesssim&\|\nabla z_\e\|_{\Ld^q(U)}^2\bigg(1+\sum_{p:U\cap\e J_{p,\rho}''\ne\varnothing}|\e J''_{p,\rho}|(\diam J_{p,\rho})^\frac{2q}{q-2}\bigg)^\frac{q-2}q.
\end{eqnarray*}
Combined with~\eqref{eq:bnd-nabz} and~(c), this proves $w'_\e-w_\e\to0$ almost surely in $H^1_0(U)^d$, so that~(d$'$) follows from~(d).

\medskip
\substep{2.3} Incompressibility constraint: local modifications.
\\
We construct an approximating sequence $(v_\e)_\e\subset\Ld^2(\Omega; H^1(U)^d)$ with the following properties:
\begin{enumerate}[(a$''$)]
\item almost surely there exists $\e_0>0$ such that $v_\e$ is supported in $U$ for all $0<\e<\e_0$;
\smallskip\item $\D(v_\e)|_{\e I_n}=0$ for all $n$;
\smallskip\item $\Div(v_\e)=0$;
\smallskip\item $v_\e-(v+\e\psi_E(\tfrac\cdot\e)\partial_Ev)\to0$ almost surely in $H^1_0(U)^d$.
\end{enumerate}\smallskip
In view of the properties of $J_{p,\rho}'$ and $J_{p,\rho}''$, we recall that the set $J_{p,\rho}''\setminus J_{p,\rho}'$ is a John domain with constant $\lesssim_{\rho}1+|J_{p,\rho}|\lesssim_{\rho}(1+\diam J_{p,\rho})^d$, and its basepoint can be chosen at distance~$\gtrsim_{\rho}1$ from the boundary.
We may then appeal to Bogovskii's construction in form of Lemma~\ref{lem:Bogo}, which provides us some $z_{p,\rho}^\e\in H^1_0(J_{p,\rho}''\setminus J_{p,\rho}')^d$ such that
\[\Div(z_{p,\rho}^\e)\,=\,\Div(w_\e')(\e\cdot)\qquad\text{in $J''_{p,\rho}\setminus J'_{p,\rho}$},\]
and
\[\|\nabla z_{p,\rho}^\e\|_{\Ld^2(J_{p,\rho}''\setminus J_{p,\rho}')}\,\lesssim_{\rho}\,(1+\diam J_{p,\rho})^C\|\Div(w_\e')(\e\cdot)\|_{\Ld^2(J_{p,\rho}''\setminus J_{p,\rho}')}.\]
Now we define
\[v_\e:=w'_\e-\sum_p\e z_{p,\rho}^\e(\tfrac\cdot\e),\]
and it remains to check the properties~(a$''$)--(d$''$). Item~(a$''$) follows similarly as~(a) and~(a$'$) in the previous steps.
Item~(b$''$) follows from (b$'$) as $v_\e|_{\e J_{p,\rho}'}=w'_\e|_{\e J_{p,\rho}'}$. Item~(c$''$) follows by construction. Finally, we compute for all $2\le q<\frac{2d}{d-2}$,
\begin{eqnarray*}
\|\nabla v_\e-\nabla w'_\e\|_{\Ld^2(U)}^2&=&\sum_p\|(\nabla z_{p,\rho}^\e)(\tfrac\cdot\e)\|_{\Ld^2(\e (J_{p,\rho}''\setminus J_{p,\rho}'))}^2\\
&\lesssim_{\rho}&\sum_p(1+\diam J_{p,\rho})^C\|\Div(w'_\e)\|_{\Ld^2(\e J_{p,\rho}'')}^2\\
&\le&\|\Div(w'_\e)\|_{\Ld^q(U)}^2\bigg(\sum_{p:U\cap\e J_{p,\rho}\ne\varnothing}|\e J_{p,\rho}|(1+\diam J_{p,\rho})^{\frac{Cq}{q-2}}\bigg)^\frac{q-2}q,
\end{eqnarray*}
so that item~(d$''$) follows from~(c$'$) and (d$'$).

\medskip
\substep{2.4} Conclusion.\\
Properties (a$''$)--(c$''$) of the approximating sequence $(v_\e)_\e$ ensure that $v_\e$ belongs to $H^1_0(U)^d$ for all~$\e<\e_0$ and that
\[F_\e(v_\e)\,=\,\int_U|\!\D(v_\e)|^2.\]
In view of~(d$''$), we find almost surely
\[\lim_{\e\downarrow0}\Big(F_\e(v_\e)-\int_U|\!\D(v+\e\psi_E(\tfrac\cdot\e)\partial_Ev)|^2\Big)=0.\]
Expanding the gradient,
\[\D(v+\e\psi_E(\tfrac\cdot\e)\partial_Ev)\,=\,(\D(\psi_E)+E)(\tfrac\cdot\e)\partial_Ev+\e\psi_E(\tfrac\cdot\e)\otimes_s\nabla\partial_Ev,\]
where $\otimes_s$ stands for the symmetric tensor product,
and noting that the sublinearity property~\eqref{eq:conv-cor} of $\psi_E$ entails $\e\psi_E(\tfrac\cdot\e)\otimes_s\nabla\partial_Ev\to0$ almost surely in $\Ld^2(U)^{d\times d}$,
we deduce almost surely,
\[\lim_{\e\downarrow0}\Big(F_\e(v_\e)-\int_U(\partial_Ev)(\partial_{E'}v)\big((\D(\psi_E)+E):(\D(\psi_{E'})+E')\big)(\tfrac\cdot\e)\Big)=0.\]
As by definition $E:\Bb E'=\expec{(\D(\psi_E)+E):(\D(\psi_{E'})+E')}$, cf.~\eqref{eq:def-B}, the $\Gamma$-limsup inequality~(B) now follows from the ergodic theorem.
\qed

\section{Homogenization of linear elasticity with unbounded stiffness}\label{sec:stiff}
This section is devoted to the proofs of Proposition~\ref{prop:cor-ell} and Theorem~\ref{th:deg-ell}. As proofs are similar to those of Proposition~\ref{prop:cor} and Theorem~\ref{th:homog}, we skip most details and we only emphasize how proofs are simplified in the compressible setting.

\begin{proof}[Proof of Proposition~\ref{prop:cor-ell}]
We start with the compressible case. Let~\ref{Kd} and~\ref{Kd'} hold for some $\lambda,\rho>0$ and some $\kappa>2$, and let $E\in\Md^\Sym$ be fixed.
For all $s\ge\lambda$, we consider the inclusions $\Ic_s=\bigcup_nI_{n,s}$ where $|\Aa(x)|>s$, and we consider the truncated coefficient field
\begin{equation}\label{eq:def-As-trunc}
\Aa^s\,:=\,\Aa\mathds1_{\R^d\setminus\Ic_{s}}+\Id\mathds1_{\Ic_{s}}.
\end{equation}
The uniform ellipticity condition in~\ref{Kd} ensures that $\Aa^s\le\Aa$ in the sense of quadratic forms, and in addition $\Aa^s\uparrow\Aa$ almost everywhere as $s\uparrow\infty$.
For fixed $s\ge\lambda$, as the truncated coefficient field $\Aa^s$ is stationary, ergodic, uniformly elliptic, and uniformly bounded, there exists a unique corrector field $\varphi^{1;s}_E$ that satisfies the different properties of Proposition~\ref{prop:cor-ell} with $\Ic$ replaced by $\Ic^s$.
For all $p$, let $\chi_{p,\lambda,\rho}\in C^\infty_c(\R^d)$ be a cut-off function around cluster~$J_{p,\lambda,\rho}$ with
\[\chi_{p,\lambda,\rho}|_{J_{p,\lambda,\rho}}=1,\qquad\chi_{p,\lambda,\rho}|_{\R^d\setminus(J_{p,\lambda,\rho}+\rho B)}=0,\qquad|\nabla\chi_{p,\lambda,\rho}|\lesssim\rho^{-1},\]
and consider the following stationary random vector field,
\begin{equation}\label{eq:def-phiEcirc}
\phi_{E}^\circ\,:=\,-\sum_p \chi_{p,\lambda,\rho}E(x-x_{p,\lambda,\rho}),\qquad x_{p,\lambda,\rho}:=\fint_{J_{p,\lambda,\rho}+\rho B}x\,dx.
\end{equation}
As~\ref{Kd'} holds for some $\kappa>2$, we deduce $\E[(\diam J_{0,\lambda,\rho})^2]<\infty$, and it is then easily checked that
\[(\D(\phi_E^\circ)+E)|_{\Ic_{\lambda}}=0,\qquad\expec{\nabla\phi_E^\circ}=0,\qquad\expec{|\nabla\phi_E^\circ|^2}\lesssim|E|^2.\]
Therefore, for all $s\ge\lambda$, using $\phi_E^\circ$ as a test function in the variational problem~\eqref{eq:corr-phi1} defining $\varphi_E^{1;s}$ (with $\Ic$ replaced by $\Ic^s$), and using the uniform ellipticity condition in~\ref{Kd}, we deduce
\begin{eqnarray*}
\expecm{|\!\D(\varphi^{1;s}_E)+E|^2}&\le&\expecm{(\D(\varphi^{1;s}_E)+E):\Aa^s(\D(\varphi^{1;s}_E)+E)}\\
&\le&\expecm{(\D(\phi_E^\circ)+E):\Aa^s(\D(\phi_E^\circ)+E)}\\
&\le&\lambda\expecm{|\!\D(\phi_E^\circ)+E|^2}\\
&\lesssim&\lambda|E|^2.
\end{eqnarray*}
Integrating by parts and using the sublinearity of $\varphi_E^{1;s}$ at infinity, cf.~\eqref{eq:conv-cor}, we note that
\begin{equation*}
\expecm{|\nabla\varphi_E^{1;s}|^2}\,\le\,\expecm{|\nabla\varphi_E^{1;s}|^2}+\expecm{|\Div(\varphi_E^{1;s})|^2}\,=\,2\,\expecm{|\!\D(\varphi_E^{1;s})|^2},
\end{equation*}
so the above ensures that the full gradient $\nabla\varphi_E^{1;s}$ is uniformly bounded in $\Ld^2(\Omega;\Ld^2_\loc(\R^d)^{d\times d})$ as $s\uparrow\infty$. Finally, we check that weak limit points coincide with the unique minimizer~$\D(\varphi^{1}_E)$ of the limiting variational problem~\eqref{eq:corr-phi1}.

\medskip\noindent
We turn to the incompressible case. Given $E \in \Md_0^\Sym$, the test function $\phi_E^\circ$ in~\eqref{eq:def-phiEcirc} must then be suitably modified to be divergence-free, for which we can proceed as in the proof of Proposition~\ref{prop:cor} based on the key truncation result in Lemma~\ref{lem:trunc}.
\end{proof}

\begin{proof}[Proof of Theorem~\ref{th:deg-ell}]
We start with the compressible case.
Let~\ref{Kd} and~\ref{Kd'} hold for some $\lambda,\rho>0$ and some $\kappa>d$.
As for Theorem~\ref{th:homog}, we use a $\Gamma$-convergence approach. For all $\e>0$, we consider the unique solution $u_\e^1$ of the variational problem~\eqref{eq:ellPDE}, that is,
\[\inf\bigg\{\tfrac12 F^1_\e(v)-\int_Uf\cdot v~:~v\in H^1_0(U)^d\bigg\},\]
in terms of the energy functional
\[F^1_\e(v)\,:=\,\int_U\D( v):\Aa(\tfrac\cdot\e)\D( v),\]
and we also consider the unique minimizer $\bar u$ of the variational problem
\[\inf\bigg\{\tfrac12\bar F^1(v)-\int_Uf\cdot v~:~v\in H^1_0(U)^d\bigg\},\]
in terms of the homogenized energy functional
\[\bar F^1(v)\,:=\,\int_U\D( v):\bar\Aa^1\D( v).\]
It suffices to prove the almost sure $\Gamma$-convergence of energy functionals $F_\e^1$ to $\bar F^1$ on $H^1_0(U)^d$ as $\e\downarrow0$. As in the proof of Theorem~\ref{th:homog}, the $\Gamma$-liminf inequality easily follows from the known result for the truncated problem with $\Aa$ replaced by $\Aa^s$, cf.~\eqref{eq:def-As-trunc}, together with the weak convergence $\nabla\varphi_E^{1;s}\cvf{}\nabla\varphi_E^1$ in $\Ld^2(\Omega;\Ld^2_\loc(\R^d)^{d\times d})$ as $s\uparrow\infty$.
It only remains to prove the $\Gamma$-limsup inequality: given $v\in C^\infty_c(U)^d$, we need to construct a sequence $(v^1_\e)_\e$ with $v^1_\e\cvf{}v$ in~$H^1_0(U)^d$ such that
\begin{equation}\label{eq:gamma-limsup-F1}
\limsup_{\e\downarrow}F_\e^1(v^1_\e)\,\le\,\bar F^1(v).
\end{equation}
For that purpose, we consider the truncated two-scale expansion
\begin{equation}\label{eq:def-veps1}
v^1_\e\,:=\,T_1^\e[v]+\e\varphi_E^1(\tfrac\cdot\e)T_0^\e[\partial_Ev],
\end{equation}
where we implicitly sum over $E$ in an orthonormal basis of $\Md^\Sym$,
in terms of the following truncation operators as in~\eqref{eq:def-T01},
\begin{eqnarray*}
T_0^\e[g]&:=&\Big(1-\sum_p\chi_{p,\lambda,\rho}(\tfrac\cdot\e)\Big)g+\sum_p\chi_{p,\lambda,\rho}(\tfrac\cdot\e)\Big(\fint_{\e (J_{p,\lambda,\rho}+\rho B)}g\Big),\\
T_1^\e[g]&:=&\Big(1-\sum_p\chi_{p,\lambda,\rho}(\tfrac\cdot\e)\Big)g\\
&&\qquad+\sum_p\chi_{p,\lambda,\rho}(\tfrac\cdot\e)\bigg(\Big(\fint_{\e (J_{p,\lambda,\rho}+\rho B)}g\Big)+\Big(\fint_{\e(J_{p,\lambda,\rho}+\rho B)}\nabla_jg\Big)(x-\e x_{p,\lambda,\rho})_j\bigg).
\end{eqnarray*}
Expanding the gradients, we find
\begin{multline*}
F_\e^1(v^1_\e)\,=\,\int_UT_0^\e[\partial_Ev]T_0^\e[\partial_{E'}v]\big((\D(\varphi^1_E)+E):\Aa(\D(\varphi^1_{E'})+E')\big)(\tfrac\cdot\e)\\
+2\int_UT_0^\e[\partial_Ev](\D(\varphi^1_E)+E)(\tfrac\cdot\e):\Aa(\tfrac\cdot\e)\Big(\D(T_1^\e[v])-T_0^\e[\D(v)]+\e\varphi^1_{E'}(\tfrac\cdot\e)\otimes_s\nabla T_0^\e[\partial_{E'}v]\Big)\\
+\int_U\Big(\D(T_1^\e[v])-T_0^\e[\D(v)]+\e\varphi^1_E(\tfrac\cdot\e)\otimes_s\nabla T_0^\e[\partial_Ev]\Big)\\
:\Aa(\tfrac\cdot\e)\Big(\D(T_1^\e[v])-T_0^\e[\D(v)]+\e\varphi^1_{E'}(\tfrac\cdot\e)\otimes_s\nabla T_0^\e[\partial_{E'}v]\Big),
\end{multline*}
and thus, noting that $\nabla T_1^\e[v]-T_0^\e[\nabla v]$ and $\nabla T_0^\e[\nabla v]$ vanish in $\e\Ic_\lambda$,
\begin{multline*}
\Big|F_\e^1(v^1_\e)\,-\,\int_UT_0^\e[\partial_Ev]T_0^\e[\partial_{E'}v]\big((\D(\varphi^1_E)+E):\Aa(\D(\varphi^1_{E'})+E')\big)(\tfrac\cdot\e)\Big|\\
\,\lesssim\,\lambda\int_U|T_0^\e[\partial_Ev]||(\nabla\varphi^1_E+E)(\tfrac\cdot\e)| \Big|\nabla T_1^\e[v]-T_0^\e[\nabla v]+\e\varphi^1_{E'}(\tfrac\cdot\e)\otimes\nabla T_0^\e[\partial_{E'}v]\Big|\\
+\lambda\int_U\Big|\nabla T_1^\e[v]-T_0^\e[\nabla v]+\e\varphi^1_E(\tfrac\cdot\e)\otimes\nabla T_0^\e[\partial_Ev]\Big|^2.
\end{multline*}
By definition of truncation operators, a direct computation leads us to
\begin{multline}\label{eq:bnd-Xeps}
\Big|F_\e^1(v^1_\e)\,-\,\int_UT_0^\e[\partial_Ev]T_0^\e[\partial_{E'}v]\big((\D(\varphi^1_E)+E):\Aa(\D(\varphi^1_{E'})+E')\big)(\tfrac\cdot\e)\Big|\\
\,\lesssim_{\rho}\,\lambda\|\nabla v\|_{W^{1,\infty}(U)}^2\,X_\e\Big(X_\e+\|(\nabla\varphi_E^1+E)(\tfrac\cdot\e)\|_{\Ld^2(U)}\Big),
\end{multline}
where we have set for abbreviation
\begin{multline*}
X_\e\,:=\,\|\e\varphi^1_E(\tfrac\cdot\e)\|_{\Ld^2(U)}
+\bigg(\e^2\sum_{p:U\cap\e J_{p,\lambda,\rho}^+\ne\varnothing}|\e J_{p,\lambda,\rho}^+|(\diam J_{p,\lambda,\rho}^+)^4\bigg)^\frac12\\
+\bigg(\sum_{p:U\cap\e J_{p,\lambda,\rho}^+\ne\varnothing}\int_{J_{p,\lambda,\rho}^+}|\e\varphi^1_E(\tfrac\cdot\e)|^2(\diam J_{p,\lambda,\rho}^+)^2\bigg)^\frac12,
\end{multline*}
and $J_{p,\lambda,\rho}^+:=J_{p,\lambda,\rho}+\rho B$.
For all $r>2$ and $0<\eta<2$, writing $\e\lesssim_\rho|\e J_{p,\lambda,\rho}^+|^{\frac{1}d}$, using the discrete $\ell^\frac{d+2-\eta}d$--$\ell^1$ inequality and Hölder's inequality, we find
\begin{multline*}
X_\e\,\lesssim_\rho\,\|\e\varphi^1_E(\tfrac\cdot\e)\|_{\Ld^2(U)}
+\e^\frac\eta2\bigg(\sum_{p:U\cap\e J_{p,\lambda,\rho}\ne\varnothing}|\e J_{p,\lambda,\rho}^+|(\diam J_{p,\lambda,\rho}^+)^\frac{4d}{d+2-\eta}\bigg)^\frac{d+2-\eta}{2d}\\
+\|\e\varphi^1_{E'}(\tfrac\cdot\e)\|_{\Ld^r(U)}\bigg(\sum_{p:U\cap\e J_{p,\lambda,\rho}\ne\varnothing}|\e J_{p,\lambda,\rho}|(\diam J_{p,\lambda,\rho})^\frac{2r}{r-2}\bigg)^\frac{r-2}{2r}.
\end{multline*}
Given $\kappa>d$, we can find $2<r<\frac{2d}{d-2}$ and $0<\eta<2$ such that $\frac{2r}{r-2}<\kappa$ and $\frac{4d}{d+2-\eta}<\kappa$. In view of condition~\ref{Kd'}, together with the sublinearity of $\varphi_E^1$ in form of $\e\varphi_E^1(\tfrac\cdot\e)\to0$ almost surely in $\Ld^q_\loc(\R^d)$ for all $q<\frac{2d}{d-2}$, we deduce $X_\e\to0$ almost surely.
Combined with~\eqref{eq:bnd-Xeps}, this yields almost surely,
\[\limsup_{\e\downarrow0}F_\e(v_\e^1)\,\le\,\limsup_{\e\downarrow0}\int_UT_0^\e[\partial_Eu]T_0^\e[\partial_{E'}u]\big((\D(\varphi^1_E)+E):\Aa(\D(\varphi^1_{E'})+E')\big)(\tfrac\cdot\e).\]
Since by Proposition~\ref{prop:cor-ell} the expression $(\D(\varphi^1_E)+E):\Aa(\D(\varphi^1_{E'})+E')$ is stationary and belongs to $\Ld^1(\Omega;\Ld^1_\loc(\R^d))$, and since $T_0^\e[\partial_Eu]T_0^\e[\partial_{E'}u]\to(\partial_Eu)(\partial_{E'}u)$ in $\Ld^\infty(U)$, the ergodic theorem yields the conclusion~\eqref{eq:gamma-limsup-F1}.

\medskip\noindent
In the incompressible case, the recovery sequence $v_\e^1$ in~\eqref{eq:def-veps1} must be suitably modified to be divergence-free, for which we can proceed as in the proof of Theorem~\ref{th:homog} based on local surgery using the key truncation result in Lemma~\ref{lem:trunc}.
\end{proof}

\section*{Acknowledgements}
The authors thank Hugo Duminil-Copin for his feedback on the subcritical percolation estimates in Section~\ref{sec:percol}.
MD acknowledges financial support from the CNRS-Momentum program,
and AG from the European Research Council (ERC) under the European Union's Horizon 2020 research and innovation programme (Grant Agreement n$^\circ$~864066).

\bibliographystyle{plain}

\begin{thebibliography}{10}

\bibitem{ADM-06}
G.~Acosta, R.~G. Dur\'{a}n, and M.~A. Muschietti.
\newblock Solutions of the divergence operator on {J}ohn domains.
\newblock {\em Adv. Math.}, 206(2):373--401, 2006.

\bibitem{BG-72b}
G.~K. Batchelor and J.T. Green.
\newblock The determination of the bulk stress in suspension of spherical
  particles to order $c^2$.
\newblock {\em J. Fluid Mech.}, 56(3):401--427, 1972.

\bibitem{BG-72a}
G.~K. Batchelor and J.T. Green.
\newblock The hydrodynamic interaction of two small freely-moving spheres in a
  linear flow field.
\newblock {\em J. Fluid Mech.}, 56(2):375--400, 1972.

\bibitem{Biskup-11}
M.~Biskup.
\newblock Recent progress on the random conductance model.
\newblock {\em Probab. Surv.}, 8:294--373, 2011.

\bibitem{Braides-06}
A.~Braides.
\newblock A handbook of ${\Gamma}$-convergence.
\newblock In {\em Handbook of differential equations: stationary partial
  differential equations. {V}ol. {3}}, Handb. Differ. Equ.
  Elsevier/North-Holland, Amsterdam, 2006.

\bibitem{DalMaso-93}
G.~Dal~Maso.
\newblock {\em An introduction to {$\Gamma$}-convergence}.
\newblock Progress in Nonlinear Differential Equations and their Applications,
  8. Birkh\"auser Boston Inc., Boston, MA, 1993.

\bibitem{D-20}
M.~Duerinckx.
\newblock Effective viscosity of random suspensions without uniform separation.
\newblock Preprint, arXiv:2008.13188.

\bibitem{DG-21b}
M.~Duerinckx and A.~Gloria.
\newblock On {E}instein's effective viscosity formula.
\newblock Preprint, arXiv:2008.03837.

\bibitem{DG20b}
M.~Duerinckx and A.~Gloria.
\newblock {Multiscale functional inequalities in probability: Constructive
  approach}.
\newblock {\em Ann. H. Lebesgue}, 3:825--872, 2020.

\bibitem{DG-21a}
M.~Duerinckx and A.~Gloria.
\newblock Corrector equations in fluid mechanics: {E}ffective viscosity of
  colloidal suspensions.
\newblock {\em Arch. Ration. Mech. Anal.}, 239:1025--1060, 2021.

\bibitem{DCRT-19}
H.~Duminil-Copin, A.~Raoufi, and V.~Tassion.
\newblock Exponential decay of connection probabilities for subcritical
  {V}oronoi percolation in {$\Bbb{R}^d$}.
\newblock {\em Probab. Theory Related Fields}, 173(1-2):479--490, 2019.

\bibitem{DCRT-20}
H.~Duminil-Copin, A.~Raoufi, and V.~Tassion.
\newblock Subcritical phase of {$d$}-dimensional {P}oisson-{B}oolean
  percolation and its vacant set.
\newblock {\em Ann. H. Lebesgue}, 3:677--700, 2020.

\bibitem{Duvaut-Lions}
G.~Duvaut and J.-L. Lions.
\newblock {\em Inequalities in mechanics and physics}, volume 219 of {\em
  Grundlehren der Mathematischen Wissenschaften}.
\newblock Springer-Verlag, Berlin-New York, 1976.

\bibitem{Einstein-05}
A.~Einstein.
\newblock {\"U}ber die von der molekularkinetischen {T}heorie der {W}\"arme
  geforderte {B}ewegung von in ruhenden {F}l\"ussigkeiten suspendierten
  {T}eilchen.
\newblock {\em Ann. Phys.}, 322(8):549--560, 1905.

\bibitem{Galdi}
G.~P. Galdi.
\newblock {\em An introduction to the mathematical theory of the
  {N}avier-{S}tokes equations. Steady-state problems}.
\newblock Springer Monographs in Mathematics. Springer, New York, second
  edition, 2011.

\bibitem{GV-GL-21}
D.~G\'erard-Varet and A.~Girodroux-Lavigne.
\newblock Homogenization of stiff inclusions through network approximation.
\newblock Preprint, arXiv:2106.06299.

\bibitem{Grimmett}
G.~Grimmett.
\newblock {\em Percolation}.
\newblock Springer-Verlag, New York, 1989.

\bibitem{Hofer-GV-20}
R.~Höfer and D.~Gérard-Varet.
\newblock {Mild assumptions for the derivation of Einstein's effective
  viscosity formula}.
\newblock Preprint, arXiv:2002.04846.

\bibitem{JKO94}
V.~V. Jikov, S.~M. Kozlov, and O.~A. Ole{\u\i}nik.
\newblock {\em Homogenization of differential operators and integral
  functionals}.
\newblock Springer-Verlag, Berlin, 1994.

\bibitem{LSS-97}
T.~M. Liggett, R.~H. Schonmann, and A.~M. Stacey.
\newblock Domination by product measures.
\newblock {\em Ann. Probab.}, 25(1):71--95, 1997.

\bibitem{Martio-88}
O.~Martio.
\newblock John domains, bi-{L}ipschitz balls and {P}oincar\'{e} inequality.
\newblock {\em Rev. Roumaine Math. Pures Appl.}, 33(1-2):107--112, 1988.

\bibitem{Penrose-01}
M.~D. Penrose.
\newblock Random parking, sequential adsorption, and the jamming limit.
\newblock {\em Comm. Math. Phys.}, 218(1):153--176, 2001.

\bibitem{Reshetnyak-80}
Yu.~G. Reshetnyak.
\newblock Integral representations of differentiable functions in domains with
  a nonsmooth boundary.
\newblock {\em Sibirsk. Mat. Zh.}, 21(6):108--116, 221, 1980.

\end{thebibliography}

\def\cprime{$'$} \def\cprime{$'$} \def\cprime{$'$}

\end{document}